\newtheorem{thm}{\bf Theorem}
\newtheorem{lem}{\bf Lemma}
\newtheorem{example}{\bf Example}
\newtheorem{definition}{\bf Definition}
\newtheorem{cor}{Corollary}
\newtheorem{alg}{Algorithm}
\newcommand{\norm}[1]{{ \left\| { #1 } \right\| }}
\def\h{\hspace{-0.2cm}}
\def\nul{{\rm null}}
\begin{document}
\title{\bf A modification of the generalized shift-splitting method for singular saddle point problems}
\author{{Davod Khojasteh Salkuyeh\footnote{Corresponding author} and Maryam Rahimian}\\ [2mm]
{\small \textit{Faculty of Mathematical Sciences, University of Guilan, Rasht, Iran }}\\
{\small \textit{E-mails: khojasteh@guilan.ac.ir, maryam.rahimian1988@gmail.com}\textit{}}}
\date{}
\maketitle
{\bf Abstract.} A modification of the generalized shift-splitting (GSS) method is presented for solving singular saddle point problems. In this kind of modification, the diagonal shift matrix is replaced by a block diagonal matrix which is symmetric positive definite. Semi-convergence of the proposed method is investigated. The induced preconditioner is applied to the saddle point problem and the preconditioned system is solved by the restarted generalized minimal residual method. {Eigenvalue distribution of the preconditioned matrix is also discussed.} Finally some numerical experiments are given to show the effectiveness and robustness of the new preconditioner. Numerical results show that the modified GSS method is superior to the classical GSS method.  \\[-3mm]

\noindent{\it  Keywords}: { Singular saddle point problem, preconditioner, generalized shift-splitting, symmetric positive definite.}\\
\noindent{\it  AMS Subject Classification}: 65F10, 65F50, 65N22. \\
\pagestyle{myheadings}\markboth{D.K. Salkuyeh, M. Rahimian}{On the GSS method for singular saddle point problems}
\thispagestyle{empty}

\section{Introduction}
We consider the following saddle point problem
\begin{align}\label{eq1}
\mathcal{A}u\equiv\left(\begin{matrix}
A & B^T\\
-B & 0\\
\end{matrix}\right)\left(\begin{matrix}
x\\
y\\
\end{matrix}\right)=\left(\begin{matrix}
f\\
g\\
\end{matrix}\right)=b
\end{align}
where $A\in \mathbb{R}^{n \times n}$ is a nonsymmetric positive definite matrix ($x^TAx>0$ for all $0\neq x\in \mathbb{R}^n$) and $B\in \mathbb{R}^{m\times n}$ is rank deficient ($rank(B)=r<m\leqslant n$). In addition, we assume that the matrices $A$ and $B$ are sparse and  $f\in \mathbb{R}^n$ and  $g\in\mathbb{R}^m$ are two given vectors.  It is easy to verify that the matrix $\mathcal{A}$ is singular. We also assume that the singular saddle point problem \eqref{eq1} is consistent. Saddle point problems of the form \eqref{eq1} appear in a variety of scientific and engineering problems; e.g., computational fluid dynamics, constrained optimization \cite{IFISS}. It is mentioned that if $r=m$, then the coefficient matrix $\mathcal{A}$ is nonsingular and the saddle point problem \eqref{eq1} has a unique solution \cite{Benzi-a1}.

Several efficient iterative methods have been presented to solve the saddle point problems. Some Uzawa-type schemes have been presented to solve saddle point problems in \cite{Bai3,Bai2,Chao1,Chen2,Chen1}. Bai et al. in \cite{Bai-HSS} proposed the Hermitian and skew-Hermitian splitting (HSS) method for solving non-Hermitian positive definite system of linear equations. Next, Benzi and Golub \cite{Benzi-a1} applied the HSS iterative method to the saddle point problem and  investigated its convergence properties. They applied the induced HSS preconditioner to the saddle point problem and solved the preconditioned system by the generalized minimal residual (GMRES) method. A preconditioned HSS (PHSS) iterative method involving a single parameter was established by Bai et al. in \cite{Bai-PHSS}. Then, Bai and Golub  proposed its two-parameter acceleration, called the accelerated Hermitian and skew-Hermitian splitting (AHSS) iterative method \cite{Bai-AHSS}.

Bai et al. in \cite{Bai-JCM} presented a shift-splitting preconditioner for solving the system of linear equations $Ax=b$ where $A$ is a large and sparse non-Hermitian positive definite matrix. In fact, using the shift-splitting of the matrix $A$ as
\begin{equation}
A=M(\alpha)- N(\alpha)=\dfrac{1}{2}(\alpha I+A)-\dfrac{1}{2}(\alpha I-A),
\end{equation}
they proposed the shift-splitting iteration method
\[
M(\alpha)x^{(k+1)}=N(\alpha)x^{(k)}+b,
\]
where $\alpha>0$. This method serves the  preconditioner $M(\alpha)$, called shift-splitting preconditioner, for the system $Ax=b$. Numerical results presented  in \cite{Bai-JCM} show that the shift-splitting preconditioner induces effective preconditioned Krylov subspace iterative methods. Using the idea of \cite{Bai-JCM} and in the case that the matrix $A$ is symmetric positive definite and $B$ is of full rank, a shift-splitting preconditioner was presented by Cao et al. in \cite{Cao-CAM-2014} for the saddle point problem. In fact, for a given $\alpha>0$, they split the matrix ${\cal A}$ as
\[
\mathcal{A}=\frac{1}{2}(\alpha \mathcal{I}+\mathcal{A})-\frac{1}{2}(\alpha \mathcal{I}-\mathcal{A}),
\]
and used the matrix
\[
\mathcal{P}_{SS}=\frac{1}{2}(\alpha \mathcal{I}+\mathcal{A}),
\]
as a preconditioner for the saddle point problem. When $A$ is nonsymmetric positive definite and $B$ is of full rank, Cao et al. in \cite{Cao-AML-2015} considered the generalized shift-splitting (GSS)
\[
\mathcal{A}=\dfrac{1}{2}
\left(
  \begin{array}{cc}
\alpha I+A & B^T\\
-B & \beta I
  \end{array}
\right)-\dfrac{1}{2}
\left(
  \begin{array}{cc}
\alpha I-A & -B^T\\
B & \beta I
  \end{array}
\right),
\]
and investigated the convergence properties of the corresponding stationary iterative method
\begin{equation}\label{GSSM}
\left(
  \begin{array}{cc}
\alpha I+A & B^T\\
-B & \beta I
  \end{array}
\right)u^{(k+1)}=
\left(
  \begin{array}{cc}
\alpha I-A & -B^T\\
B & \beta I
  \end{array}
\right)u^{(k)}+2b,
\end{equation}
where $\alpha,\beta>0$. Then, they applied the matrix
\[
\mathcal{P}_{GSS}=\dfrac{1}{2}\left(\begin{matrix}
\alpha I+A & B^T\\
-B & \beta I
\end{matrix}\right),
\]
as a preconditioner for the saddle point problem. Obviously, when $\alpha=\beta$, the GSS method reduces to the shift-splitting method. In \cite{Ren}, Ren et al. investigated the eigenvalue distribution of the shift-splitting preconditioned
saddle point matrix and showed that all eigenvalues having nonzero imaginary parts are located in an intersection of two circles and all real eigenvalues are located in a positive interval. Shi et al. in \cite{Shi} provided eigenvalue bounds for the nonzero eigenvalues of the shift-splitting  preconditioned singular nonsymmetric saddle point matrices.

Salkuyeh et al. in \cite{Salkuyeh-AML-2015} applied the generalized shift-splitting preconditioner to the generalized saddle point problems with symmetric positive definite $(1,1)$-block and symmetric positive semi-definite $(2,2)$-block. Then they developed the results for the same problem when the symmetry of the matrix is omitted \cite{Salkuyeh-Conf}. Cao and Miao in \cite{Cao-CAMWA-2016} analyzed semi-convergence of the GSS method for the singular saddle point problem (\ref{eq1}). Shen and Shi in \cite{Shen-CAMWA-2016} applied the GSS method to a class of singular generalized saddle point problems and analyzed the semi-convergence properties of the method. {Recently, in \cite{Huang3}}, Huang and Huang presented a class of generalized shift-splitting (GSS) iterative methods to solve \eqref{eq1} when $A$ is positive real matrix and $B$ is rank-deficient matrix. They investigated the semi-convergence property of the GSS iterative method and gave the sharper bounds of the eigenvalues for the GSS iterative method and proposed the inexact GSS iterative method.

In this paper, a modification of the GSS iterative method (MGSS) is presented for the singular saddle point problem (\ref{eq1}). In the special cases of the MGSS method one obtains the shift-splitting and generalized shift-splitting methods.

 {Throughout the paper, for a complex number $z$, $\Re(z)$ and $\Im(z)$ are denoted for the real and the imaginary parts of $z$, respectively. For a complex matrix $A$, the conjugate transpose of $A$ is denoted by $A^*$. For a Hermitian positive definite matrix $A$, the $A$-norm of a vector $x$ is defined by $\|x\|_A=\sqrt{x^*Ax}$. For two square matrices $A$ and $B$, we write $A\succ B$ (resp. $A\succeq B$) if $A-B$ is symmetric positive definite (resp. symmetric positive semidefinite). In the same way, $A\prec B$ and $A\preceq B$ are defined. For a nonsigular matrix $C$, the spectral condition number of $C$ is denoted by $\kappa(C)$, i.e., $\kappa(C)=\|C\|_2\|C^{-1}\|_2$. For a square matrix $A$, the spectral radius of $A$ is defined by $\rho(A)$, i.e., $\rho(A)=\max_{\lambda\in \sigma(A)}|\lambda|$, where $\sigma(A)$  is the spectrum of $A$. For a matrix $A$, $\nul(A)$ is used for the null space of $A$.}

This paper is organized as follows. The MGSS iterative method is proposed in Section \ref{Sec2} and its semi-convergence properties are presented in Section \ref{Sec3}. Section \ref{Sec4} is devoted to the spectral analysis of the preconditioned matrix.  Numerical experiments are presented in \ref{Sec5}. The paper is ended by some concluding remarks in Section \ref{Sec6}.

\section{The MGSS iterative method}\label{Sec2}
 Let
\begin{equation}\label{eq-Omega}
\Omega =\left(\begin{matrix}
H & 0\\
0 & Q\\
\end{matrix}\right),\end{equation}
where  $ H\in \mathbb{R}^{n\times n}$ and  $ Q\in \mathbb{R}^{m\times m}$ are symmetric positive definite matrices. Then the matrix $\mathcal{A}$ is split as
$\mathcal{A}=\mathcal{M}-\mathcal{N}$, where
\[
\mathcal{M}=\dfrac{1}{2}(\Omega +\mathcal{A}),\quad \mathcal{N}=\dfrac{1}{2}(\Omega -\mathcal{A}).
\]
It is easy to see that the matrix $\mathcal{M}$ is nonsingular. Similar to the GSS method we consider the MGSS method as
\begin{equation}\label{EqGHSS1}
\mathcal{M}u^{(k+1)}=\mathcal{N}u^{(k)}+b,
\end{equation}
for the saddle point problem (\ref{eq1}), where $u^{(0)}$ is an initial guess. Eq. \eqref{EqGHSS1} is equivalent to
\begin{equation}\label{eq3}
\frac{1}{2}\left(\begin{matrix}
H+A & B^T\\
-B & Q\\
\end{matrix}\right)\left( \begin{array}{c}
    x^{(k+1)} \\
    y^{(k+1)}
  \end{array}\right)=\frac{1}{2}\left(\begin{matrix}
H-A & -B^T\\
B & Q\\
\end{matrix}\right)\left( \begin{array}{c}
    x^{(k)} \\
    y^{(k)}
  \end{array}\right)+\left(\begin{matrix}
f\\
g\\
\end{matrix}\right).
\end{equation}
Obviously if $H=\alpha I$ and $Q=\alpha I$, then the MGSS method reduces to the shift-splitting method, and $H=\alpha I$ and $Q=\beta I$, then the MGSS method coincides with the GSS method. In \cite{ZhengCAM}, when the matrix $A$ is symmetric positive definite the convergence of the
iterative method (\ref{eq3}) was investigated. Denoting $\Gamma=\mathcal{M}^{-1}\mathcal{N}$ and $h=\mathcal{M}^{-1}b$, the iterative method \eqref{EqGHSS1} can be rewritten as
\begin{equation}\label{MGSS}
u^{(k+1)}=\Gamma u^{(k)}+h.
\end{equation}
We have
\begin{equation}\label{eqqq}
\Gamma=\mathcal{M}^{-1}\mathcal{N}=\mathcal{I}-\mathcal{M}^{-1}\mathcal{A}.
\end{equation}
Since $\mathcal{A}$ is singular,  {this relation shows} that 1 is an eigenvalue of $\Gamma$ and as a result we have $\rho(\Gamma)\geqslant 1$, where $\rho(.)$ denotes the spectral radius of the matrix.  {Hence, we focus on} the semi-convergence of the MGSS method. To do so, we need the following definition and lemma.

\begin{definition}(see \cite{Berman:1994})\label{def1}
The iterative method \eqref{MGSS} is semi-convergent if, for any initial guess $(x_0;y_0)$, the iteration sequence $(x_k;y_k)$ produced by \eqref{MGSS} converges to a solution  $(x_\star;y_\star)$ of \eqref{eq1}. Moreover, it holds
\begin{equation}
\left[\begin{array}{c}
x_\star\\
y_\star
\end{array}\right]=(I-\Gamma)^Dc+(I-E)\left[\begin{array}{c}
x_0\\
y_0
\end{array}\right],
\end{equation}
with $E=(I-\Gamma)(I-\Gamma)^D$, where $I$ is the identity matrix and $(I-\Gamma)^D$ denotes  the Drazin inverse of $I-\Gamma$.
\end{definition}
\begin{lem} (see \cite{Berman:1994})\label{lm1}
The iterative method \eqref{MGSS} is semi-convergent if and only if the following conditions hold:\\
$\left(1\right)$ $Index(I-\Gamma)=1$, i.e., $rank(I-\Gamma)=rank(I-\Gamma)^2$; \\
$\left(2\right)$ The pseudo-spectral radius of $\Gamma$ satisfies $$\vartheta(\Gamma)=\max\lbrace\vert\lambda\vert, \lambda\in\sigma(\Gamma), \lambda\neq1\rbrace<1.$$
\end{lem}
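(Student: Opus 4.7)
The plan is to reduce the iteration $u^{(k+1)}=\Gamma u^{(k)}+h$ to its Jordan canonical form and read off the required conditions block by block. Since $\mathcal{A}$ is singular, \eqref{eqqq} shows that $1\in\sigma(\Gamma)$, so I would write
\[
\Gamma = W\begin{pmatrix} J_1 & 0 \\ 0 & J_2 \end{pmatrix} W^{-1},
\]
where $J_1$ collects all Jordan blocks associated with the eigenvalue $1$ and $J_2$ collects the Jordan blocks for all other eigenvalues of $\Gamma$. Iterating yields the closed-form expression $u^{(k)}=\Gamma^k u^{(0)}+\sum_{j=0}^{k-1}\Gamma^j h$, so semi-convergence decouples into (a) convergence of $\Gamma^k$ on an appropriate invariant complement of $\nul(I-\Gamma)$, and (b) convergence of the partial sums $\sum_{j=0}^{k-1}\Gamma^j h$ to a limit that can be expressed through the Drazin inverse.

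For (a) the argument is standard block-by-block. The block $J_2^k$ tends to $0$ if and only if every eigenvalue of $J_2$ lies strictly inside the unit disk, which is precisely condition (2), $\vartheta(\Gamma)<1$. For $J_1^k$, a Jordan block of size $s$ with eigenvalue $1$ stays bounded (and in fact equals the identity) only when $s=1$; this is equivalent to $\dim\nul(I-\Gamma)=\dim\nul(I-\Gamma)^2$, hence to $\mathrm{rank}(I-\Gamma)=\mathrm{rank}(I-\Gamma)^2$, which is condition (1), $\mathrm{Index}(I-\Gamma)=1$. Necessity comes for free: if either condition fails, one picks an initial vector along a non-trivial Jordan chain of $\Gamma$ for which the iterates either grow unboundedly (if $\vartheta(\Gamma)\geq 1$ on $J_2$) or fail to stabilize (if there is a Jordan block of size $\geq 2$ at the eigenvalue $1$), contradicting semi-convergence.

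For (b), decompose $h$ along the invariant subspaces of $J_1$ and $J_2$. On the $J_2$-part the telescoping identity $(I-J_2)\sum_{j=0}^{k-1}J_2^j=I-J_2^k$ together with $\rho(J_2)<1$ gives convergence to $(I-J_2)^{-1}$ applied to the corresponding component of $h$. On the $J_1$-part, condition (1) forces $J_1=I$, so $\sum_{j=0}^{k-1}J_1^j$ acts as multiplication by $k$; the consistency assumption on \eqref{eq1}, translated via $h=\mathcal{M}^{-1}b$, is exactly what guarantees that the corresponding component of $h$ vanishes, so this piece of the sum is identically zero and the iteration preserves its $\nul(I-\Gamma)$-component from $u^{(0)}$. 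Re-assembling the two pieces through $W$ recovers the formula in Definition \ref{def1} with $(I-\Gamma)^D c$ as the inhomogeneous term and $(I-E)u^{(0)}$ as the frozen initial-data term. The main technical obstacle is precisely this last bookkeeping — matching the block-diagonal limit with the abstract Drazin-inverse formula and confirming that $(I-\Gamma)^D c+(I-E)u^{(0)}$ is not merely a fixed point of \eqref{MGSS} but an actual solution of \eqref{eq1}; everything else is routine Jordan-form analysis.
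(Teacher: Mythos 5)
This lemma is not proved in the paper at all: it is quoted verbatim as a known result from Berman and Plemmons \cite{Berman:1994}, so there is no in-paper proof to compare against. Your Jordan-form argument is, in substance, the classical proof of that result, and it is correct in outline. The sufficiency direction is handled properly: $J_2^k\to 0$ exactly when $\vartheta(\Gamma)<1$; boundedness of $J_1^k$ forces every Jordan block at the eigenvalue $1$ to be $1\times 1$, which by rank--nullity is $\mathrm{rank}(I-\Gamma)=\mathrm{rank}(I-\Gamma)^2$; and your use of consistency of \eqref{eq1} (equivalently $h\in\mathrm{range}(I-\Gamma)$, i.e.\ the $J_1$-component of $W^{-1}h$ vanishes) is exactly what kills the potentially divergent $k\cdot h_1$ term in the partial sums. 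The Drazin-inverse bookkeeping you flag as the main obstacle is in fact routine here: in the Jordan coordinates $(I-\Gamma)^D=W\,\mathrm{diag}(0,(I-J_2)^{-1})\,W^{-1}$ and $I-E=W\,\mathrm{diag}(I,0)\,W^{-1}$, which reproduces the limit you computed blockwise and shows it solves $(I-\Gamma)u=h$ since $Eh=h$. For necessity, a slightly cleaner route than picking initial vectors along Jordan chains is to note that two iterate sequences started from $u^{(0)}$ and $u^{(0)}+v$ differ by $\Gamma^k v$; convergence of both for every $v$ forces $\Gamma^k$ to converge as a matrix, which yields (1) and (2) at once. In short: the paper buys brevity by citing \cite{Berman:1994}; your argument supplies the standard proof behind that citation and contains no genuine gap.
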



\section{Semi-convergence of the MGSS iterative method}\label{Sec3}

In this section, we present the semi-convergence analysis of the MGSS iterative method.
Let $\lambda$ be an eigenvalue of the matrix $\mathcal{P}_{MGSS}=\mathcal{M}^{-1}\mathcal{N}$ and $u=(x;y)$ be the corresponding eigenvector.
Hence, we have $\mathcal{N}u=\lambda\mathcal{M}u$ or equivalently
\begin{equation}\label{eq4}
\begin{cases}
(H-A)x-B^Ty=\lambda(H+A)x+\lambda B^Ty,\\
Bx+Qy=-\lambda Bx+\lambda Qy.
\end{cases}
\end{equation}

\begin{lem}\label{lm2}
 Assume that $A\in \mathbb{R}^{n\times n}$ is nonsymmetric positive definite and $B\in \mathbb{R}^{m\times n}$ $(m\leqslant n)$ is rank-deficient. Let $H\in \mathbb{R}^{n\times n}$ and $Q\in \mathbb{R}^{m\times m}$ be symmetric positive definite matrices. If $\lambda$ is an eigenvalue of the matrix $\mathcal{P}_{MGSS}$, then $\lambda\neq -1$.
\end{lem}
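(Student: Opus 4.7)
The plan is to prove this by contradiction: assume $\lambda=-1$ is an eigenvalue of $\mathcal{P}_{MGSS}$ and derive that the corresponding eigenvector must be zero, which contradicts the definition of an eigenvector.

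First I would take the eigenvalue equation \eqref{eq4} and substitute $\lambda=-1$ directly. The top block becomes $(H-A)x - B^Ty = -(H+A)x - B^Ty$, which simplifies to $2Hx=0$ after the $A$-terms and the $B^T y$-terms cancel on both sides. The bottom block becomes $Bx+Qy = -(-1)Bx + (-1)Qy = Bx - Qy$, which simplifies to $2Qy=0$.

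Next I would invoke the hypothesis that $H\in\mathbb{R}^{n\times n}$ and $Q\in\mathbb{R}^{m\times m}$ are symmetric positive definite. In particular, both are nonsingular, so $Hx=0$ forces $x=0$ and $Qy=0$ forces $y=0$. Therefore $u=(x;y)=0$, contradicting the fact that $u$ is an eigenvector of $\mathcal{P}_{MGSS}$ (which by convention is nonzero).

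There is really no significant obstacle here; the only thing to be careful about is making the algebra tidy and explicitly citing the SPD hypothesis on $H$ and $Q$ to ensure their invertibility. The proof does not use the positive definiteness of $A$ nor the rank condition on $B$, so the conclusion $\lambda\neq -1$ is quite robust and holds purely from the structure of $\Omega$ being SPD in each diagonal block.
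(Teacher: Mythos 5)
Your proof is correct and is essentially the same as the paper's: the paper substitutes $\lambda=-1$ into Eq.~\eqref{eq4} to obtain $\Omega u=0$ and concludes from the nonsingularity of $\Omega$, which is exactly your computation $2Hx=0$, $2Qy=0$ written in block form. Your closing remark that neither the positive definiteness of $A$ nor the rank condition on $B$ is needed is accurate, but the argument itself does not differ from the paper's.
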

\begin{proof}
If $\lambda=-1$, from Eq. \eqref{eq4}, we obtain $\Omega u=0$ which is a contradiction, because  $u\neq 0$ and $\Omega$ is nonsingular.
\end{proof}
\begin{lem}\label{lm3}
Let $A$ be a nonsymmetric positive definite matrix and $B$ be a rank-deficient matrix. Assume that $H$ and $Q$ are symmetric positive definite matrices.  Then, $\lambda=1$ if and only if $x=0$.
\end{lem}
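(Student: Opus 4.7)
The plan is to work directly from the eigenvalue equations in \eqref{eq4} and exploit (i) the positive definiteness of $A$, (ii) the fact that $Q$ is nonsingular, and (iii) Lemma \ref{lm2} (for the reverse direction, to sidestep the $\lambda=-1$ case if needed).

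For the forward direction, I would substitute $\lambda=1$ into \eqref{eq4}. The first equation collapses to $Ax + B^{T}y = 0$ and the second to $Bx = 0$. Taking the inner product of $Ax + B^{T}y = 0$ with $x$ and using $Bx=0$ (so that $x^{T}B^{T}y=(Bx)^{T}y=0$) yields $x^{T}Ax=0$. Since $A$ is positive definite in the sense $x^{T}Ax>0$ for all nonzero $x$, this forces $x=0$.

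For the reverse direction, I would set $x=0$ in \eqref{eq4}. The second equation becomes $Qy=\lambda Qy$, i.e., $(1-\lambda)Qy=0$. Because $u=(x;y)$ is an eigenvector and $x=0$, we must have $y\neq 0$; together with $Q$ symmetric positive definite (hence nonsingular), it follows that $Qy\neq 0$ and therefore $\lambda=1$. (One may also check consistency with the first equation, which reduces to $(\lambda+1)B^{T}y=0$; this is satisfied for $\lambda=1$ whenever $y\in\nul(B^{T})$, and such nonzero $y$ exists precisely because $B$ is rank-deficient.)

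The argument is essentially algebraic manipulation plus one use of definiteness, so I do not expect a real obstacle; the only point that deserves care is the observation $y\neq 0$ when $x=0$, which follows because $u$ is an eigenvector. Neither direction needs Lemma \ref{lm2} in a critical way, although it confirms that the case $\lambda=-1$ cannot masquerade as an exceptional value when one inspects the first equation in the reverse direction.
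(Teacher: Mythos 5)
Your argument is correct and takes essentially the same route as the paper's proof: substitute $\lambda=1$ to get $Ax+B^{T}y=0$ and $Bx=0$, pair the first equation with $x$ to kill the cross term and conclude $x=0$ by positive definiteness, then read $\lambda=1$ off the second equation $Qy=\lambda Qy$ when $x=0$, using $y\neq 0$. The only small point of care is that the eigenvector may be complex, so the pairing should be the Hermitian form $x^{*}Ax$ (with $\Re(x^{*}Ax)>0$ for $x\neq 0$) rather than $x^{T}Ax$, which is exactly how the paper writes it.
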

\begin{proof}
If $\lambda=1$, from Eq. \eqref{eq4} we obtain
\begin{equation}\label{eq10}
\begin{cases}
Ax=-B^Ty,\\
Bx=0.
\end{cases}
\end{equation}
Multiplying both sides of the first equality of Eq. \eqref{eq10} by $x^*$, implies that $x^*Ax=-(Bx)^*y=0$ and this yields $x=0$.

If $x=0$, then from the second equality of \eqref{eq4} we obtain $Qy=\lambda Qy$, which yields $\lambda=1$ since $y\neq 0$.
\end{proof}


\begin{thm}\label{Theo1}
 Assume that $A\in \mathbb{R}^{n\times n}$ is nonsymmetric positive definite and $B\in \mathbb{R}^{m\times n}$ $(m\leqslant n)$ is rank-deficient. Also assume that $H\in \mathbb{R}^{n\times n}$ and $Q\in \mathbb{R}^{m\times m}$ are symmetric positive definite matrices. Then, $\vartheta(\Gamma)<1$.
\end{thm}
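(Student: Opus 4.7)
The plan is to show that any eigenvalue $\lambda$ of $\Gamma$ with $\lambda\neq 1$ satisfies $|\lambda|<1$ by reducing the problem to showing that a certain M\"obius transform $\mu=(1-\lambda)/(1+\lambda)$ has positive real part, since $|\lambda|<1$ if and only if $\Re(\mu)>0$.

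Take an eigenvalue $\lambda$ of $\Gamma$ with $\lambda\neq 1$ and an associated eigenvector $u=(x;y)$. By Lemma \ref{lm3} we have $x\neq 0$, and by Lemma \ref{lm2} we have $\lambda\neq -1$, so we may divide both equations of \eqref{eq4} by $1+\lambda$. Setting $\mu=(1-\lambda)/(1+\lambda)$, the eigenvalue system becomes
\[
\mu Hx - Ax = B^T y,\qquad Bx = -\mu Q y.
\]

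Next, I would multiply the first equation on the left by $x^*$ and use the second equation to eliminate $B^T y$ via $x^*B^T y=(Bx)^*y=-\bar{\mu}\,y^*Qy$ (here Hermiticity of $Q$ is used). This yields the scalar identity
\[
\mu\,(x^*Hx)+\bar{\mu}\,(y^*Qy)=x^*Ax.
\]
Writing $\mu=\mu_1+i\mu_2$, setting $p=x^*Hx$ and $q=y^*Qy$, and taking real parts gives
\[
\mu_1\,(p+q)=\Re(x^*Ax).
\]
Since $H\succ 0$ and $x\neq 0$ we have $p>0$, and $q\geq 0$ because $Q\succ 0$. Since $A$ is nonsymmetric positive definite, $\Re(x^*Ax)=x^*\tfrac{A+A^T}{2}x>0$ for any nonzero complex $x$. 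Hence $\mu_1>0$.

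Finally, the elementary computation
\[
|\lambda|^2=\left|\frac{1-\mu}{1+\mu}\right|^2=\frac{(1-\mu_1)^2+\mu_2^2}{(1+\mu_1)^2+\mu_2^2}<1\iff \mu_1>0
\]
concludes $|\lambda|<1$, hence $\vartheta(\Gamma)<1$. The only delicate point is correctly handling the complex nature of the eigenvector: one must invoke $\Re(x^*Ax)>0$ (from positive definiteness of the symmetric part of $A$) rather than $x^TAx>0$; the case $y=0$ causes no issue because $p>0$ is already enough to force $\mu_1>0$ on the left-hand side.
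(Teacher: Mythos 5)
Your proof is correct and follows essentially the same route as the paper: the same M\"obius substitution (your $\mu$ is the paper's $-\omega$), the same scalar identity obtained by left-multiplying by $x^*$ and eliminating $B^Ty$ through the second equation, and the same appeal to $\Re(x^*Ax)>0$. The only difference is organizational --- you assume $\lambda\neq 1$ up front and invoke Lemma \ref{lm3} to get $x\neq 0$ and hence the strict inequality $\Re(\mu)>0$ directly, whereas the paper first derives $|\lambda|\leqslant 1$ and then handles the boundary case; your ordering in fact justifies the normalization step more cleanly.
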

\begin{proof}
Without loss of generality let $\norm{x}_H^2=x^* Hx=1$. Multiplying both sides of the first equation in \eqref{eq4} by $x^*$ yields
\begin{equation}\label{eq6}
\dfrac{1-\lambda}{1+\lambda}=x^*Ax+(Bx)^*y.
\end{equation}
Also from the second equation in \eqref{eq4} we have
\begin{equation}\label{eq7}
Bx=\omega Qy,\quad \rm{with}~\omega=\dfrac{\lambda-1}{\lambda+1}.
\end{equation}
Substituting Eq. \eqref{eq7} in \eqref{eq6} yields
\begin{equation}
\omega =-x^*Ax-\bar{\omega}y^*Qy.
\end{equation}
Therefore, we have $\Re(\omega)=-\Re(x^*Ax)-\Re(\omega)y^*Qy$. Hence, we deduce that
\begin{equation}\label{eq8}
\Re(\omega)=-\dfrac{\Re(x^*Ax)}{1+y^*Qy}\leqslant 0.
\end{equation}
On the other hand, we have $\omega=\dfrac{\lambda-1}{\lambda+1}$, which is equivalent to
\begin{equation*}
\lambda=\dfrac{1+\omega}{1-\omega}=\dfrac{1+\Re(\omega)+i\Im(\omega)}{1-\Re(\omega)-i\Im(\omega)}.
\end{equation*}
Then
\begin{eqnarray}\label{eq9}
\vert\lambda\vert=\sqrt{\dfrac{(1+\Re(\omega))^2+(\Im(\omega))^2}{(1-\Re(\omega))^2+(\Im(\omega))^2}}.
\end{eqnarray}
From Eqs. \eqref{eq8} and \eqref{eq9}, we get $\vert\lambda\vert\leqslant 1$. To complete the proof we need to prove that if $\vert\lambda\vert=1$, then $\lambda=1$. If $\vert\lambda\vert=1$, then it follows from Eq. \eqref{eq9} that $\Re(\omega)=0$. This, together with Eq. \eqref{eq8} gives $\Re(x^*Ax)=0$. Since $A$ is positive definite, it eventuates $x=0$. Therefore, from Lemma \ref{lm3} we conclude that $\lambda=1$.
\end{proof}


\begin{thm}\label{Theo2}
Suppose that $A\in \mathbb{R}^{n\times n}$ is nonsymmetric positive definite and $B\in \mathbb{R}^{m\times n}(m\leqslant n)$ is rank-deficient. Let $H\in \mathbb{R}^{n\times n}$ and $Q\in \mathbb{R}^{m\times m}$ be symmetric positive definite matrices. Then,  $rank(I-\Gamma)=rank(I-\Gamma)^2$, where $\Gamma$ is the iteration matrix of the MGSS method.
\end{thm}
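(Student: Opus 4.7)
The plan is to prove the equivalent statement $\nul(I-\Gamma) = \nul((I-\Gamma)^2)$, which on a finite-dimensional space is equivalent to $\mathrm{rank}(I-\Gamma) = \mathrm{rank}(I-\Gamma)^2$. Using the identity $I-\Gamma = \mathcal{M}^{-1}\mathcal{A}$ from \eqref{eqqq} and the nonsingularity of $\mathcal{M}$, one has $\nul(I-\Gamma) = \nul(\mathcal{A})$ and $\nul((I-\Gamma)^2) = \{u : \mathcal{A}\mathcal{M}^{-1}\mathcal{A}u = 0\}$. The inclusion $\nul(I-\Gamma)\subseteq \nul((I-\Gamma)^2)$ is automatic, so only the reverse inclusion needs work.

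First I would describe $\nul(\mathcal{A})$ explicitly: if $\mathcal{A}(x;y)=0$, then $Ax+B^Ty=0$ and $Bx=0$, and the trick used in the proof of Lemma \ref{lm3} (left-multiply by $x^*$) together with the positive definiteness of $A$ gives $x=0$. Hence
\begin{equation*}
\nul(\mathcal{A}) = \{(0;y) : B^Ty = 0\}.
\end{equation*}

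Next, take any $u=(x;y)$ with $(I-\Gamma)^2u=0$ and set $w := \mathcal{M}^{-1}\mathcal{A}u$, so that $\mathcal{A}w=0$. By the characterization just established, $w=(0;z)$ with $B^Tz=0$. Applying $\mathcal{M}$ yields
\begin{equation*}
\mathcal{A}u = \mathcal{M}w = \tfrac{1}{2}\bigl(B^Tz;\, Qz\bigr) = \tfrac{1}{2}(0;\,Qz),
\end{equation*}
so $Ax+B^Ty=0$ and $-Bx=\tfrac{1}{2}Qz$. The goal reduces to showing $z=0$, because then $Bx=0$ and, combined with $Ax+B^Ty=0$, this means $\mathcal{A}u=0$, i.e.\ $u\in\nul(I-\Gamma)$.

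The key (and mildly clever) step is the last one: the identities $Qz=-2Bx$ and $B^Tz=0$ combine as
\begin{equation*}
z^*Qz = -2\,z^*Bx = -2\,(B^Tz)^*x = 0,
\end{equation*}
and since $Q$ is symmetric positive definite this forces $z=0$. I expect this to be the only substantive obstacle: the rest is bookkeeping with the block structure of $\mathcal{M}$ and $\mathcal{A}$. Once $z=0$ is established, the conclusion follows immediately, and combining with Theorem \ref{Theo1} and Lemma \ref{lm1} yields semi-convergence of the MGSS iteration.
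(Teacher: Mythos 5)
Your proposal is correct and follows essentially the same route as the paper: both reduce the claim to $\nul((\mathcal{M}^{-1}\mathcal{A})^2)\subseteq\nul(\mathcal{M}^{-1}\mathcal{A})$, set $s=\mathcal{M}^{-1}\mathcal{A}u$, use $x^{T}Ax=0\Rightarrow x=0$ to get the first block of $s$ to vanish, and then use $\mathcal{M}s=\mathcal{A}u$ together with the positive definiteness of $Q$ to kill the second block. The only cosmetic difference is that you evaluate $z^{T}Qz=-2(B^{T}z)^{T}x=0$ directly, whereas the paper substitutes $s_2=-2Q^{-1}Br_1$ and concludes from $(Br_1)^{T}Q^{-1}Br_1=0$; these are equivalent manipulations.
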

\begin{proof}
Since $\Gamma=\mathcal{M}^{-1}\mathcal{N}=I-\mathcal{M}^{-1}\mathcal{A}$, we deduce that $rank(I-\Gamma)^2=rank(I-\Gamma)$ holds if and only if $\nul(\mathcal{M}^{-1}\mathcal{A})^2=\nul(\mathcal{M}^{-1}\mathcal{A})$.
It is clear that $\nul((\mathcal{M}^{-1}\mathcal{A})^2)\supseteq \nul(\mathcal{M}^{-1}\mathcal{A})$. Hence, all we need is to show that
\begin{equation}\label{eq5}
\nul(\mathcal{M}^{-1}\mathcal{A})^2\subseteq \nul(\mathcal{M}^{-1}\mathcal{A}).
\end{equation}
Let $r=(r_1;r_2)\in \nul(\mathcal{M}^{-1}\mathcal{A})^2$ with $r_1\in \mathbb{R}^n$ and $r_2\in \mathbb{R}^m$. This means that $(\mathcal{M}^{-1}\mathcal{A})^2r=0$ which is equivalent to $\mathcal{A}\mathcal{M}^{-1}\mathcal{A}r=0$. Letting $s=(s_1;s_2)=\mathcal{M}^{-1}\mathcal{A}r$, we have $\mathcal{A}s=0$, which is equivalent to
\begin{equation}\label{eq7s}
\begin{cases}
As_1+B^Ts_2=0,\\
-Bs_1=0.
\end{cases}
\end{equation}
Multiplying the first equality of  Eq. \eqref{eq7s} by ${s_1}^T$, implies that $${s_1}^TAs_1+(Bs_1)^Ts_2=0.$$
Thus, using the second equality of \eqref{eq7s} we obtain ${s_1}^TAs_1=0$ and since $A$ is positive definite, this implies that $s_1=0$. Hence, from the first equation in \eqref{eq7s}, we deduce that $B^Ts_2=0$.  From $s=\mathcal{M}^{-1}\mathcal{A}r$, we have $\mathcal{M}s=\mathcal{A}r$ which is equivalent to
\begin{eqnarray*}
\left(\begin{array}{cc}
H+A & B^T\\
-B & Q
\end{array}\right)
\left(\begin{array}{c}
0\\
s_2
\end{array}\right)&\hspace{-0.2cm}=\hspace{-0.2cm}&2\left(\begin{array}{cc}
A & B^T\\
-B & 0
\end{array}\right)\left( \begin{array}{c}
r_1\\
r_2
\end{array}\right),
\end{eqnarray*}
which can be written as
\begin{equation}
\begin{cases}\label{eq8s}
B^Ts_2=2Ar_1+2B^Tr_2,\\
Qs_2=-2Br_1.
\end{cases}
\end{equation}
From the second equation in \eqref{eq8s} we see that  $s_2=-2Q^{-1}Br_1$. Therefore, it follows from $B^Ts_2=0$ that
\[
(Br_1)^TQ^{-1}Br_1=0,
\]
and hence $Br_1=0$. Therefore $s_2=0$ which completes the proof.
\end{proof}

According to Lemma \ref{lm1}  and  Theorems \ref{Theo1} and \ref{Theo2} the semi-convergence of the MGSS method was proved. We use the preconditioner $\mathcal{M}$ for a Krylov subspace method such as GMRES, or its restarted version GMRES($\ell$) to solve system \eqref{eq1}. We require to compute a vector of the form $z=\mathcal{M}^{-1}r$ for using the preconditioner $\mathcal{M}$ within a Krylov subspace method where $r=(r_1;r_2)$ with $r_1\in \mathbb{R}^n$ and $r_2\in \mathbb{R}^m$. By some computation, we can write
\[
\mathcal{M}=\dfrac{1}{2}\left(\begin{matrix}
I & 0\\
-B(H+A)^{-1} & I
\end{matrix}\right)\left(\begin{matrix}
H+A & 0\\
0 & S
\end{matrix}\right)\left(\begin{matrix}
I & (H+A)^{-1}B^T\\
0 & I
\end{matrix}\right),
\]
where $S=Q+B(H+A)^{-1}B^T$. Hence,
\begin{equation}\label{M-1}
\mathcal{M}^{-1}=2\left(\begin{matrix}
I & -(H+A)^{-1}B^T\\
0 & I
\end{matrix}\right)\left(\begin{matrix}
(H+A)^{-1} & 0\\
0 & S^{-1}
\end{matrix}\right)\left(\begin{matrix}
I & 0\\
B(H+A)^{-1} & I
\end{matrix}\right).
\end{equation}
By applying \eqref{M-1}, we present the following algorithm to compute the vector $z=(z_1;z_2)$ with $z_1\in \mathbb{R}^n$ and $z_2\in \mathbb{R}^m$
\begin{alg}\rm\label{Alg1}
Computation of $z=\mathcal{M}^{-1}r$. \\[-5mm]
\begin{enumerate}
\item Solve $(H+A)w=2r_1$ for $w$. \\[-6mm]
\item Compute $w_2=2r_2+Bw$. \\[-6mm]
\item Solve $Sz_2=w_2$ for $z_2$. \\[-6mm]
\item Solve  $(H+A)t=B^T z_2$ for $t$.\\[-6mm]
\item Compute $z_1=w-t$.
\end{enumerate}
\end{alg}

It is worth noting that all the presented results for the MGSS method hold when $A$ is symmetric positive definite. In this case, both of the matrices $H+A$ and $S$ are symmetric positive definite and the corresponding systems in Algorithm \ref{Alg1} can be solved exactly by the Cholesky factorization or inexactly by the conjugate gradient method. In the case that the matrix $A$ is nonsymmetric positive definite we can use the LU factorization to solve both of the systems exactly or by a Krylov subspace method like the GMRES method inexactly.

Since the matrix $S$ in Algorithm \ref{Alg1} involves the term $B(A+H)^{-1}B^T$, for large problems, solving the corresponding system using an exact solver like LU factorization may be very expensive. In general both of the matrices $A$ and $H+A$ are nonsymmetric positive definite. Hence,  to compute
vector $z_2$ in step 3 of Algorithm  \ref{Alg1} it is recommended to  use a Krylov-subspace method like the GMRES($\ell$) method. Within the GMRES($\ell$) we need to compute vectors like $y=Sx$, where $x$ is given vector. To compute $y$ we can use the following algorithm.
\begin{alg}\rm\label{Alg2}
Computation of  $y=Sx$. \\[-5mm]
\begin{enumerate}
\item $q_1:=B^Tx$\\[-6mm]
\item Solve $(H+A)q_2=q_1$ for $q_2$ using the LU factorization of $H+A$ \\[-6mm]
\item $y:=Qx+Bq_2$
\end{enumerate}
\end{alg}

\section{Eigenvalue distribution of the preconditioned matrix}\label{Sec4}

In this section, we discuss the spectral properties of the preconditioned matrix $\mathcal{M}^{-1}\mathcal{A}$. Since in the preconditioner matrix $\mathcal{M}$, the multiplicative factor $1/2$ has no effect on the preconditioned system, we drop it and use $\mathcal{K}=\Omega+\mathcal{A}$ as a preconditioner.

Let $\mu$ be an eigenvalue of the preconditioned matrix $\mathcal{K}^{-1}\mathcal{A}$. From Eq. \eqref{eqqq}, each eigenvalue $\lambda$ of $\Gamma$ can be written as $\lambda=1-2\mu$, where $\mu\in\sigma(\Gamma)$. Therefore,
\[
| 1-2\mu| =| \lambda |\leqslant \rho(\Gamma)\leqslant 1,
\]
which is equivalent to
\begin{equation}\label{eq13}
| \mu-\frac{1}{2}|\leqslant \frac{1}{2}.
\end{equation}
This shows that the eigenvalues of the preconditioned  matrix are located in a circle centered at the point $(\frac{1}{2},0)$ with radius $\frac{1}{2}$.

On the other hand, using Eq. (\ref{M-1}) the preconditioned matrix can be written as
\begin{eqnarray}
\nonumber\mathcal{K}^{-1}\mathcal{A} &\h=\h& \left(\begin{matrix}
I & -(H+A)^{-1}B^T\\
0 & I
\end{matrix}\right)\left(\begin{matrix}
(H+A)^{-1} & 0\\
0 & S^{-1}
\end{matrix}\right)\left(\begin{matrix}
I & 0\\
B(H+A)^{-1} & I
\end{matrix}\right)\left(\begin{matrix}
A & B^T\\
-B & 0
\end{matrix}\right)\\
&\h=\h& \left(\begin{matrix}
L  & (H+A)^{-1}B^T(I-K)\\
S^{-1}B((H+A)^{-1}A-I) & K
\end{matrix}\right),\label{Equa22}\qquad
\end{eqnarray}
where $L=(H+A)^{-1}(A-B^TS^{-1}B(H+A)^{-1}A+B^TS^{-1}B)$ and $K=S^{-1}B(H+A)^{-1}B^T$.

Since the matrix $H$ is unknown, it is difficult in general  to characterize the eigenvalue distribution of the preconditioned matrix.
However, we consider the special case that the matrix $H$ depends on the parameter $\alpha$ (say $H_{\alpha}$) such that $H_{\alpha}\rightarrow 0$ as $\alpha$ tends to zero. In this case, we use $\mathcal{K}_\alpha$, $L_\alpha$ and $K_\alpha$ instead of $\mathcal{K}$, $L$ and $K$, respectively.

\begin{thm}\label{Theo3}
 {Suppose that $A\in \mathbb{R}^{n\times n}$ is nonsymmetric positive definite and $B\in \mathbb{R}^{m\times n}(m\leqslant n)$ is rank-deficient. Let $H_{\alpha}\in \mathbb{R}^{n\times n}$ and $Q\in \mathbb{R}^{m\times m}$ be symmetric positive definite matrices. Then, $n$ eigenvalues of preconditioned matrix $\mathcal{K}_{\alpha}^{-1}\mathcal{A}$ tends to $1$, as $\alpha$ approaches to zero. The rest of the eigenvalues of $\mathcal{K}_{\alpha}^{-1}\mathcal{A}$ are of the form
\[
\gamma_{\alpha}=\dfrac{r_{\alpha}}{q_{\alpha}+r_{\alpha}},
\]
where $r_{\alpha}$  is a complex number with $\Re(r_{\alpha})\geqslant 0$ and  $q_{\alpha}>0$.}
\begin{proof}
From Eq. (\ref{Equa22}), it is easy to see that the $(1,1)$- and $(2,1)$-blocks in the matrix $\mathcal{K}_{\alpha}^{-1}\mathcal{A}$ tend to $I$ and zero,  respectively.  This means that $n$ eigenvalues of the matrix $\mathcal{M}_{\alpha}^{-1}\mathcal{A}$ approaches to $1$ as $\alpha\rightarrow 0$. The rest of the eigenvalues are equal to those of the matrix $K_{\alpha}$. Let $(\gamma_{\alpha} , u_{\alpha})$ be an eigenpair of the matrix $K_{\alpha}$, where $K_{\alpha}=S_{\alpha}^{-1}B(H_{\alpha}+A)^{-1}B^T$ and
$S_{\alpha}=Q+B(H_{\alpha}+A)^{-1}B^T$. Therefore, we have $K_{\alpha}u_{\alpha}=\gamma_{\alpha} u_{\alpha}$, which is equivalent to
\begin{equation*}
B(H_{\alpha}+A)^{-1}B^Tu_{\alpha}=\gamma_{\alpha} \left(Q+B(H_{\alpha}+A)^{-1}B^T\right)u_{\alpha}.
\end{equation*}
Multiplying both sides of the latter equation by $u_{\alpha}^*$ gives
\begin{eqnarray}
\nonumber\gamma_{\alpha} &\h=\h& \frac{ u_{\alpha}^*B(H_{\alpha}+A)^{-1}B^Tu_{\alpha} }{u^*_{\alpha} \left(Q+B(H_{\alpha}+A)^{-1}B^T\right)u_{\alpha}}\\
\nonumber &\h=\h& \dfrac{u_{\alpha}^*B(H_{\alpha}+A)^{-1}B^Tu_{\alpha}}{u_{\alpha}^*Qu_{\alpha}+u_{\alpha}^*B(H_{\alpha}+A)^{-1}B^Tu_{\alpha}}\\
&\h=\h& \dfrac{r_{\alpha}}{q_{\alpha}+r_{\alpha}},\label{EqREig}
\end{eqnarray}
where $r_{\alpha}=u_{\alpha}^*B(H_{\alpha}+A)^{-1}B^Tu_{\alpha}$ and $q_{\alpha}=u_{\alpha}^*Qu_{\alpha}$. Since $H_{\alpha}$ is symmetric positive definite and $A$ is nonsymmetric positive definite,
we deduce that $\Re(r_{\alpha})\geqslant 0$. On the other hand, since $Q$ is symmetric positive definite, we conclude that $q_{\alpha}>0$.
\end{proof}
\end{thm}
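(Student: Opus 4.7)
The plan is to exploit the block decomposition of $\mathcal{K}_\alpha^{-1}\mathcal{A}$ given in Eq. \eqref{Equa22} and then perform an asymptotic analysis as $\alpha \to 0$ (so that $H_\alpha\to 0$). Writing
\[
\mathcal{K}_\alpha^{-1}\mathcal{A}=
\left(\begin{matrix}
L_\alpha & (H_\alpha+A)^{-1}B^T(I-K_\alpha)\\
S_\alpha^{-1}B\bigl((H_\alpha+A)^{-1}A-I\bigr) & K_\alpha
\end{matrix}\right),
\]
with $L_\alpha=(H_\alpha+A)^{-1}(A-B^TS_\alpha^{-1}B(H_\alpha+A)^{-1}A+B^TS_\alpha^{-1}B)$ and $K_\alpha=S_\alpha^{-1}B(H_\alpha+A)^{-1}B^T$, I would substitute $H_\alpha\to 0$ to obtain $(H_\alpha+A)^{-1}\to A^{-1}$, $S_\alpha\to Q+BA^{-1}B^T=:S_0$, and observe that $L_\alpha\to A^{-1}\bigl(A-B^TS_0^{-1}BA^{-1}A+B^TS_0^{-1}B\bigr)=A^{-1}A=I$, while the $(2,1)$-block tends to $S_0^{-1}B(A^{-1}A-I)=0$. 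Thus $\mathcal{K}_\alpha^{-1}\mathcal{A}$ approaches a block upper-triangular matrix whose diagonal blocks are $I\in\mathbb{R}^{n\times n}$ and $K_0:=S_0^{-1}BA^{-1}B^T\in\mathbb{R}^{m\times m}$, so by continuity of the spectrum $n$ eigenvalues of $\mathcal{K}_\alpha^{-1}\mathcal{A}$ tend to $1$ and the remaining $m$ eigenvalues coincide asymptotically with the eigenvalues of $K_\alpha$.

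Next I would characterize the eigenvalues of $K_\alpha$ by a Rayleigh-quotient argument. For any eigenpair $(\gamma_\alpha,u_\alpha)$ of $K_\alpha$, the identity
\[
B(H_\alpha+A)^{-1}B^Tu_\alpha=\gamma_\alpha\bigl(Q+B(H_\alpha+A)^{-1}B^T\bigr)u_\alpha
\]
yields, after left multiplication by $u_\alpha^*$ and solving for $\gamma_\alpha$, the expression
\[
\gamma_\alpha=\frac{r_\alpha}{q_\alpha+r_\alpha},\qquad r_\alpha=u_\alpha^*B(H_\alpha+A)^{-1}B^Tu_\alpha,\quad q_\alpha=u_\alpha^*Qu_\alpha.
\]
Since $Q\succ 0$ and $u_\alpha\neq 0$, clearly $q_\alpha>0$.

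To prove $\Re(r_\alpha)\geq 0$, I would use the fact that $H_\alpha+A$ is positive real: its Hermitian part equals $H_\alpha+\tfrac{1}{2}(A+A^T)$, which is symmetric positive definite. A standard fact (take $y=(H_\alpha+A)^{-1}v$ with $v=B^Tu_\alpha$, so $v^*(H_\alpha+A)^{-1}v=y^*(H_\alpha+A)^*y$ and real parts of conjugates agree) implies the inverse of a positive real matrix is again positive real. Hence $\Re(r_\alpha)\geq 0$, with equality only if $B^Tu_\alpha=0$. Combining this with $q_\alpha>0$ completes the proof.

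The main subtlety is the block-triangularization/limit argument in the first paragraph: strictly speaking, for fixed $\alpha>0$ the $(1,2)$-block of $\mathcal{K}_\alpha^{-1}\mathcal{A}$ need not vanish, so one cannot claim exact equality of the bottom $m$ eigenvalues with those of $K_\alpha$; the argument is an asymptotic one, justified by continuity of eigenvalues with respect to matrix entries and the vanishing of $L_\alpha-I$ and of the $(2,1)$-block as $\alpha\to 0$. Everything else reduces to straightforward manipulations with the Schur complement $S_\alpha$ and with positive-real quadratic forms.
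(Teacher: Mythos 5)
Your proposal follows the same route as the paper's own proof: the block factorization \eqref{Equa22}, the observation that the $(1,1)$- and $(2,1)$-blocks tend to $I$ and $0$ as $\alpha\to 0$, and the Rayleigh-quotient identity $\gamma_\alpha=r_\alpha/(q_\alpha+r_\alpha)$ together with the positive-real argument giving $\Re(r_\alpha)\geqslant 0$ and $q_\alpha>0$. If anything you are more careful than the paper, which asserts without comment that the remaining eigenvalues \emph{equal} those of $K_\alpha$ even though the $(1,2)$-block does not vanish for fixed $\alpha>0$; your appeal to the continuity of the spectrum and the vanishing of $L_\alpha-I$ and of the $(2,1)$-block supplies the asymptotic justification the paper leaves implicit.
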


In the section of the numerical experiments,  we choose the matrix $Q$ as $Q=\alpha I+\beta BB^T$ where $\beta>0$. In this case, when $\alpha,\beta\rightarrow 0$ we deduce that $n$ eigenvalues of the preconditioned matrix are approximately equal to 1 and from \eqref{EqREig} it follows that the others tends to $0$ or $1$.

In the sequel, we present the eigenvalue analysis of the preconditioned matrix when the matrix $A$ is symmetric positive definite.
To do so, analogous to Theorems 3.2 and 3.3  in \cite{Ren}, we give the next theorems to discuss the eigenvalue distribution of $\mathcal{K}^{-1}\mathcal{A}$.

\begin{thm}\label{Theo4}
Suppose that  $A,H\in \mathbb{R}^{n\times n}$ and $Q\in \mathbb{R}^{m\times m}$ are symmetric positive definite matrices and $B\in \mathbb{R}^{m\times n}$ is rank-deficient. Then all the nonzero eigenvalues having nonzero imaginary parts of the preconditioned matrix $\mathcal{K}^{-1}\mathcal{A}$ are located in a circle centered at $(1,0)$ with radius $\sqrt{\frac{\lambda_{\max}(H)}{\lambda_{\max}(H)+\lambda_{\min}(A)}}$.
\begin{proof}
The proof is similar to that of Theorem 3.2 in \cite{Ren}. Let
\begin{equation*}
\mathcal{T}_0=\left(\begin{matrix}
H+A & 0\\
0 & Q
\end{matrix}\right), \quad \mathcal{I}_0=\left(\begin{matrix}
I & 0\\
0 & -I
\end{matrix}\right).
\end{equation*}
Obviously, the matrix $\mathcal{T}_0$ is symmetric positive definite. Hence, the matrix $\mathcal{K}^{-1}\mathcal{A}$ is similar to
\begin{equation}\label{eq3.2}
\mathcal{T}^{\frac{1}{2}}_0 \mathcal{K}^{-1}\mathcal{A} \mathcal{T}^{-\frac{1}{2}}_0=\left(\mathcal{T}^{-\frac{1}{2}}_0\mathcal{I}_0\mathcal{K}\mathcal{T}^{-\frac{1}{2}}_0\right)^{-1}\left(\mathcal{T}^{-\frac{1}{2}}_0\mathcal{I}_0\mathcal{A}\mathcal{T}^{-\frac{1}{2}}_0\right)=\left(\begin{matrix}
I & \bar{B}^T\\
\bar{B} & -I
\end{matrix}\right)^{-1}\left(\begin{matrix}
\bar{A} & \bar{B}^T\\
\bar{B} & 0
\end{matrix}\right),
\end{equation}
where $\bar{A}=(H+A)^{-\frac{1}{2}}A(H+A)^{-\frac{1}{2}}$ and $\bar{B}=Q^{-\frac{1}{2}}B(H+A)^{-\frac{1}{2}}$. Eq. \eqref{eq3.2} can be rewritten as
\[
\mathcal{T}^{\frac{1}{2}}_0 \mathcal{K}^{-1}\mathcal{A} \mathcal{T}^{-\frac{1}{2}}_0=\left(\begin{matrix}
I & 0\\
-\bar{B} & I
\end{matrix}\right)^{-1}\left(\begin{matrix}
I+\bar{B}^T\bar{B} & 0\\
0 & -I
\end{matrix}\right)^{-1}\left(\begin{matrix}
I & -\bar{B}^T\\
0 & I
\end{matrix}\right)^{-1}\left(\begin{matrix}
\bar{A} & \bar{B}^T\\
\bar{B} & 0
\end{matrix}\right),
\]
which is similar to
\begin{align*}
& \left[\left(\begin{matrix}
I &-\bar{B}^T\\
 0 & I
\end{matrix}\right)\left(\begin{matrix}
I+\bar{B}^T\bar{B} & 0\\
0 & -I
\end{matrix}\right)\right]^{-1}\left(\begin{matrix}
\bar{A} & \bar{B}^T\\
\bar{B} & 0
\end{matrix}\right)\left(\begin{matrix}
I & 0\\
-\bar{B} & I
\end{matrix}\right)^{-1}\\
&=\left(\begin{matrix}
I+\bar{B}^T\bar{B} & \bar{B}^T\\
0 & -I
\end{matrix}\right)^{-1}\left(\begin{matrix}
\bar{A}+\bar{B}^T\bar{B} & \bar{B}^T\\
\bar{B} & 0
\end{matrix}\right)\\
&\triangleq \hat{\mathcal{K}}^{-1}\hat{\mathcal{A}}.
\end{align*}
Evidently, the matrix
\[
\mathcal{T}_1=\left(\begin{matrix}
I+\bar{B}^T\bar{B} & 0\\
0 & I
\end{matrix}\right).
\]
is symmetric positive definite. Therefore, the matrix $\hat{\mathcal{K}}^{-1}\hat{\mathcal{A}}$ is similar to
\begin{equation*}
\mathcal{T}_1^{\frac{1}{2}}\hat{\mathcal{K}}^{-1}\hat{\mathcal{A}}\mathcal{T}_1^{-\frac{1}{2}}= \left(\mathcal{T}_1^{-\frac{1}{2}}\hat{\mathcal{K}}\mathcal{T}_1^{-\frac{1}{2}}\right)^{-1}\left(\mathcal{T}_1^{-\frac{1}{2}}\hat{\mathcal{A}}\mathcal{T}_1^{-\frac{1}{2}}\right)= \left(\begin{matrix}
I & \tilde{B}^T\\
0 & -I
\end{matrix}\right)^{-1}\left(\begin{matrix}
\tilde{A} & \tilde{B}^T\\
\tilde{B} & 0
\end{matrix}\right)\triangleq \tilde{\mathcal{K}}^{-1}\tilde{\mathcal{A}},
\end{equation*}
where $\tilde{A}= (I+\bar{B}^T\bar{B})^{-\frac{1}{2}}(\bar{A}+\bar{B}^T\bar{B})(I+\bar{B}^T\bar{B})^{-\frac{1}{2}}$, $\tilde{B}=\bar{B}(I+\bar{B}^T\bar{B})^{-\frac{1}{2}}$. Hence, we deduce that ${\mathcal{K}}^{-1}\bar{\mathcal{A}}$ is similar to $\tilde{\mathcal{K}}^{-1}\tilde{\mathcal{A}}$. Hence, it is enough  to analyse the eigenvalue distribution of $\tilde{\mathcal{K}}^{-1}\tilde{\mathcal{A}}$.

First of all, let $(\lambda, (u;v))$ be an eigenpair of the matrix $\tilde{\mathcal{K}}^{-1}\tilde{\mathcal{A}}$. Then, we have
\begin{equation*}
\left(\begin{matrix}
\tilde{A} & \tilde{B}^T\\
\tilde{B} & 0
\end{matrix}\right)\left(\begin{matrix}
u\\
v
\end{matrix}\right)=\lambda \left(\begin{matrix}
I & \tilde{B}^T\\
0 & -I
\end{matrix}\right)\left(\begin{matrix}
u\\
v
\end{matrix}\right),
\end{equation*}
which can be rewritten as
\begin{equation}\label{eq3.4}
\begin{cases}
\tilde{A}u+\tilde{B}^Tv=\lambda u+ \lambda \tilde{B}^Tv,\\
\tilde{B}u=-\lambda v.
\end{cases}
\end{equation}
If $v=0$, then  from the first equality of \eqref{eq3.4}, we obtain $\tilde{A}u=\lambda u$.
This shows that $\lambda$ is real, because  $\tilde{A}$ is symmetric positive definite. If $u=0$, then from the second equality of \eqref{eq3.4} we get
$-\lambda v=0$. Therefore, we have $\lambda=0$, since $v$ cannot be zero.

Now, we assume that $u\neq 0$ and $v\neq 0$ with $\norm{u}^2_2+\norm{v}^2_2=1$. Multiplying both sides of the first equality of \eqref{eq3.4} by $u^*$ gives
\begin{equation}\label{eq3.6}
u^*\tilde{A}u-\lambda\norm{u}^2_2=(\lambda-1)u^*\tilde{B}^Tv.
\end{equation}
Multiplying the second equation of Eq. \eqref{eq3.4} by $v^*$ results in $u^*\tilde{B}^Tv=-\bar{\lambda}\|v\|_2^2$. Substituting this into Eq. \eqref{eq3.6} yields
\begin{equation}\label{eq3.8}
u^*\tilde{A}u +|\lambda |^2 \norm{v}^2_2-\lambda+(\lambda-\bar{\lambda})\norm{v}^2_2=0.
\end{equation}
Defining $\lambda=a+ib$, the imaginary part of Eq. \eqref{eq3.8} is written as
\[
b(2\norm{v}^2_2-1)=0.
\]
From this equation we deduce that $b=0$ or $\norm{v}^2_2=\frac{1}{2}$. If $b=0$, then $\lambda$ is real. If $b\neq 0$, then we get $\norm{v}^2_2=\norm{u}^2_2=\frac{1}{2}$. This, together with Eq. \eqref{eq3.8} gives
\[
2u^*\tilde{A}u+|\lambda|^2-\lambda-\bar{\lambda}=0.
\]
By some computations and using the Courant-Fisher min-max theorem \cite{Saadbook}, we can write
\begin{equation}\label{eq3.10}
|\lambda -1|^2=1-2u^*\tilde{A}u=1-\dfrac{u^*\tilde{A}u}{u^*u}\leqslant 1-\lambda_{\min}(\tilde{A}).
\end{equation}
It is not difficult to verify that the matrix $\tilde{A}$ is similar to $(H+A+B^TQ^{-1}B)^{-1}(A+B^TQ^{-1}B)$. Suppose that $(\tilde{\lambda},\tilde{x})$ is an eigenpair of this matrix. Therefore, we have
\begin{equation}\label{eqabove}
(A+B^TQ^{-1}B)\tilde{x}=\tilde{\lambda}(H+A+B^TQ^{-1}B)\tilde{x}.
\end{equation}
Multiplying both sides of Eq. \eqref{eqabove} by $\tilde{x}^*$ and some simplifying we obtain
\begin{align}
\nonumber\tilde{\lambda}&=\dfrac{x^*Ax+x^*B^TQ^{-1}Bx}{x^*Hx+x^*Ax+x^*B^TQ^{-1}Bx}\\
\nonumber&\geqslant \dfrac{x^*Ax}{x^*Hx+x^*Ax}=\dfrac{\frac{x^*Ax}{x^*x}}{\frac{x^*Hx}{x^*x}+\frac{x^*Ax}{x^*x}}\\
&\geqslant \dfrac{\lambda_{\min}(A)}{\lambda_{\max}(H)+\lambda_{\min}(A)}.\label{eq3.10.1}
\end{align}
It follows from Eqs. \eqref{eq3.10} and \eqref{eq3.10.1} that
\[
|\lambda -1|^2 \leqslant 1-\dfrac{\lambda_{\min}(A)}{\lambda_{\max}(H)+\lambda_{\min}(A)}=\dfrac{\lambda_{\max}(H)}{\lambda_{\max}(H)+\lambda_{\min}(A)}<1,
\]
which shows that all nonzero eigenvalues having nonzero imaginary parts of the preconditioned matrix ${\mathcal{K}}^{-1}\mathcal{A}$ are located in a circle centered at (1,0) with radius $\sqrt{\frac{\lambda_{\max}(H)}{\lambda_{\max}(H)+\lambda_{\min}(A)}}$ which is strictly less than one.
\end{proof}
\end{thm}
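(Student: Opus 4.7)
The plan is to work directly with the generalized eigenvalue problem $\mathcal{A}u=\lambda\mathcal{K}u$ for an eigenpair $(\lambda,u)$ with $u=(x;y)$. Written block-wise this reads $Ax+B^{T}y=\lambda(H+A)x+\lambda B^{T}y$ and $-Bx=-\lambda Bx+\lambda Qy$. First I would dispose of the degenerate cases: $x=0$ forces $\lambda\in\{0,1\}$ since $Q$ is SPD; the case $\lambda=0$ gives $Bx=0$, and multiplying the first equation by $x^{*}$ then yields $x^{*}Ax=0$, hence $x=0$; and $\lambda=1$ is automatically real. In the remaining regime I may therefore assume $x\ne0$ and $\lambda\notin\{0,1\}$.

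Next I would eliminate $y$ by solving the second equation as $y=\tfrac{\lambda-1}{\lambda}Q^{-1}Bx$ and substituting into the first. Multiplying the result by $\lambda x^{*}$ and introducing the scalars $p=x^{*}Ax>0$, $h=x^{*}Hx>0$, $q=x^{*}B^{T}Q^{-1}Bx\ge0$ (all real, since $A$, $H$, and $B^{T}Q^{-1}B$ are symmetric), I expect to land on the scalar quadratic
\begin{equation*}
(p+h+q)\lambda^{2}-(p+2q)\lambda+q=0.
\end{equation*}
Since its coefficients are real, its nonreal roots occur in complex conjugate pairs, and for such a pair Vi\`ete's formulas yield $|\lambda|^{2}=q/(p+h+q)$ (product of roots) and $2\operatorname{Re}(\lambda)=(p+2q)/(p+h+q)$ (sum of roots).

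A short calculation would then produce the clean identity
\begin{equation*}
|\lambda-1|^{2}=|\lambda|^{2}-2\operatorname{Re}(\lambda)+1=\frac{h}{p+h+q}\le\frac{h}{h+p}=\frac{x^{*}Hx}{x^{*}Hx+x^{*}Ax},
\end{equation*}
the inequality coming from $q\ge0$. Finally, because $(s,t)\mapsto s/(s+t)$ is increasing in $s$ and decreasing in $t$, the Rayleigh-quotient bounds $x^{*}Hx\le\lambda_{\max}(H)\|x\|_{2}^{2}$ and $x^{*}Ax\ge\lambda_{\min}(A)\|x\|_{2}^{2}$ close the argument with
\begin{equation*}
|\lambda-1|^{2}\le\frac{\lambda_{\max}(H)}{\lambda_{\max}(H)+\lambda_{\min}(A)}<1.
\end{equation*}

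The main obstacle I anticipate is verifying that the substitution genuinely yields a quadratic with real coefficients, so that the Vi\`ete/discriminant machinery applies cleanly; this rests on the Hermiticity of each of the three quadratic forms. A secondary hurdle is careful bookkeeping of the degenerate cases ($x=0$, $\lambda\in\{0,1\}$), each of which corresponds to a real eigenvalue lying outside the theorem's scope. Once those are dispatched, the monotonicity step for $s/(s+t)$ is routine.
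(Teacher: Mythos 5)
Your proof is correct, and it takes a genuinely different route from the paper's. The paper follows Ren et al.: it performs a three-stage chain of similarity transformations reducing $\mathcal{K}^{-1}\mathcal{A}$ to a normalized pencil $\tilde{\mathcal{K}}^{-1}\tilde{\mathcal{A}}$ with $\tilde{A}=(I+\bar{B}^T\bar{B})^{-\frac12}(\bar{A}+\bar{B}^T\bar{B})(I+\bar{B}^T\bar{B})^{-\frac12}$, works with the eigenvector equations of that reduced system under the normalization $\|u\|_2^2+\|v\|_2^2=1$, extracts $\|v\|_2^2=\tfrac12$ from the imaginary part, and obtains $|\lambda-1|^2=1-2u^*\tilde{A}u\leqslant 1-\lambda_{\min}(\tilde{A})$, which it then bounds by identifying $\tilde{A}$ as similar to $(H+A+B^TQ^{-1}B)^{-1}(A+B^TQ^{-1}B)$ and applying a Rayleigh-quotient estimate. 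You instead eliminate $y$ directly from the original generalized eigenproblem and collapse everything to the real-coefficient scalar quadratic $(p+h+q)\lambda^{2}-(p+2q)\lambda+q=0$; Vi\`ete's formulas then give the exact identity $|\lambda-1|^2=h/(p+h+q)$, which is precisely $1$ minus the same Rayleigh quotient of the pencil $(A+B^TQ^{-1}B,\,H+A+B^TQ^{-1}B)$ that the paper arrives at, so the two arguments converge to the identical final bound. Your version is shorter and avoids the similarity reductions entirely; what the paper's machinery buys is reuse, since the normalized pencil and the spectral bounds on $\tilde{A}$ and $\tilde{B}\tilde{B}^T$ are recycled in Theorem \ref{Theo5} for the real-eigenvalue interval, whereas your elimination is tailored to the nonreal case. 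Two minor remarks: when $x=0$ the second block equation $0=\lambda Qy$ with $y\neq 0$ actually forces $\lambda=0$ rather than $\lambda\in\{0,1\}$, though this is harmless since either value is real and hence outside the theorem's scope; and your Vi\`ete step is legitimate because, for the fixed eigenvector $x$, a nonreal $\lambda$ and its conjugate are exactly the two roots of that single quadratic, so both symmetric functions of the roots are available.
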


\begin{cor}\label{Cor1}
Let $A,H\in \mathbb{R}^{n\times n}$ and $Q\in \mathbb{R}^{m\times m}$ be symmetric positive definite matrices and $B\in \mathbb{R}^{m\times n}$ be rank-deficient. Then, all the nonzero eigenvalues having nonzero imaginary parts of the preconditioned matrix ${\mathcal{K}}^{-1}\mathcal{A}$ are located in the following domain
\[
\mathcal{D}=\left\{\lambda\in \mathbb{C}: |\lambda-\frac{1}{2}|<\frac{1}{2}\right\} \cap \left\{\lambda\in \mathbb{C}:  |\lambda-1|\leqslant\sqrt{\frac{\lambda_{\max}(H)}{\lambda_{\max}(H)+\lambda_{min}(A)}}\right\}.
\]
\end{cor}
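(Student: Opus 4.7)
The plan is to combine two results already established: equation (13), which confines all eigenvalues of the preconditioned matrix to a closed disk of radius $1/2$ about $1/2$, and Theorem 4, which gives the second disk for the eigenvalues with nonzero imaginary parts. The corollary then follows simply by taking the intersection of the two loci.

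First, I would revisit the derivation of (13) to confirm that, for any eigenvalue $\mu$ of $\mathcal{K}^{-1}\mathcal{A}$, the relation $\lambda=1-2\mu$ links $\mu$ with an eigenvalue $\lambda$ of $\Gamma=\mathcal{M}^{-1}\mathcal{N}$. By Theorem 1 (the pseudo-spectral radius bound $\vartheta(\Gamma)<1$), every eigenvalue $\lambda$ of $\Gamma$ other than $1$ satisfies $|\lambda|<1$ \emph{strictly}. Because the corollary considers only eigenvalues with nonzero imaginary part, $\mu$ cannot equal $0$ (which would force $\lambda=1$, a real number). Thus $\lambda\neq 1$, which yields $|1-2\mu|<1$, or equivalently $|\mu-\tfrac{1}{2}|<\tfrac{1}{2}$. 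This gives membership in the first set of $\mathcal{D}$ with a strict inequality.

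Next, I would invoke Theorem 4 directly: under the hypotheses of the corollary (with $A$ now symmetric positive definite), every nonzero eigenvalue of $\mathcal{K}^{-1}\mathcal{A}$ with nonzero imaginary part lies inside the disk centered at $(1,0)$ with radius $\sqrt{\lambda_{\max}(H)/(\lambda_{\max}(H)+\lambda_{\min}(A))}$. This gives membership in the second set of $\mathcal{D}$.

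Combining the two inclusions yields the desired intersection. The only subtle point, which I view as the main (minor) obstacle, is bookkeeping on the strict versus non-strict inequality in the first disk: one must use Theorem 1 rather than just $\rho(\Gamma)\leqslant 1$, and observe that eigenvalues with nonzero imaginary part automatically avoid both real endpoints $\mu=0$ and $\mu=1$ of the boundary circle $|\mu-\tfrac{1}{2}|=\tfrac{1}{2}$ (noting $\mu=1$ corresponds via $\lambda=1-2\mu=-1$ to the case excluded by Lemma 2). Beyond this, the argument is purely a set-theoretic intersection and requires no new computation.
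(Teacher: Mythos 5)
Your proposal is correct and follows exactly the route the paper intends: the corollary is the intersection of the disk from Eq.~\eqref{eq13} with the disk from Theorem~\ref{Theo4}, and the paper states it without further proof. Your additional care in upgrading $|\mu-\tfrac{1}{2}|\leqslant\tfrac{1}{2}$ to a strict inequality via Theorem~\ref{Theo1} (since a nonreal $\mu$ forces $\lambda=1-2\mu\neq 1$, hence $|\lambda|<1$) is a valid refinement that the paper glosses over.
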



\begin{thm}\label{Theo5}
Let $A,H\in \mathbb{R}^{n\times n}$ and $Q\in \mathbb{R}^{m\times m}$ be symmetric positive definite matrices and $B\in \mathbb{R}^{m\times n}$ be rank-deficient. Let also $\sigma_{\min}$ and  $\sigma_{\max}$ be the smallest and the largest nonzero singular values of the matrix $B$, respectively. Then, all the  nonzero real eigenvalues of the matrix $\mathcal{K}^{-1}\mathcal{A}$ are located in
\begin{align*}
\left[ \min \left\{\dfrac{\lambda_{\min}(A)}{\lambda_{\max}(H)+\lambda_{\min}(A)}, \dfrac{\sigma^2_{\min}}{\lambda_{\max}(Q)(\lambda_{\max}(H)+\kappa(H)\lambda_{\max}(A))+\sigma^2_{\min}} \right\}\right.,\\ \left. \dfrac{\lambda_{\min}(Q)\lambda_{\max}(A)+\sigma_{\max}^2}{\lambda_{\min}(Q)(\lambda_{\min}(H)+\lambda_{\max}(A))+\sigma_{\max}^2} \right].
\end{align*}
\end{thm}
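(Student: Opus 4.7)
The plan is to reduce the question to a scalar quadratic satisfied by the nonzero real eigenvalues, then bound its roots by case-splitting on whether the eigenvector's first block lies in $\nul(B)$. Starting from an eigenpair $(\mu,(x;y))$ of $\mathcal{K}^{-1}\mathcal{A}$ with $\mu\neq 0$, rewriting $\mathcal{A}u=\mu(\Omega+\mathcal{A})u$ as $\mathcal{A}u=\tau\,\Omega u$ with $\tau=\mu/(1-\mu)>0$ turns the problem into a generalized eigenvalue problem. Expanding blockwise gives $Ax+B^Ty=\tau Hx$ and $-Bx=\tau Qy$; solving the second equation for $y$ and substituting into the first yields the purely $x$-dependent identity
\[
\tau^2 Hx-\tau Ax+B^TQ^{-1}Bx=0.
\]
Taking the Hermitian inner product with $x$ and writing $a=x^*Ax$, $h=x^*Hx$, $c=x^*B^TQ^{-1}Bx$ reduces this to $h\tau^2-a\tau+c=0$, and returning to $\mu=\tau/(1+\tau)$ produces
\[
\mu_\pm=\frac{(a+2c)\pm\sqrt{a^2-4hc}}{2(h+a+c)}.
\]

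In Case~A ($Bx=0$, so $c=0$) the only nontrivial root is $\mu=a/(a+h)$, and the Courant-Fischer estimate $h/a\leq \lambda_{\max}(H)/\lambda_{\min}(A)$ yields $\mu\geq \lambda_{\min}(A)/(\lambda_{\min}(A)+\lambda_{\max}(H))$, which is the first entry of the minimum claimed in the theorem; the corresponding upper bound $\mu\leq \lambda_{\max}(A)/(\lambda_{\min}(H)+\lambda_{\max}(A))$ is in turn dominated by the claimed right endpoint, since inserting the positive quantity $\sigma_{\max}^2$ into both numerator and denominator only enlarges a ratio that is already less than one. In Case~B ($Bx\neq 0$, $c>0$) the elementary inequality $\sqrt{a^2-4hc}\leq a$ gives $\mu_+\leq (a+c)/(h+a+c)$ and $\mu_-\geq c/(h+a+c)$. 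For the upper bound, combining $a\leq \lambda_{\max}(A)\|x\|^2$, $c\leq (\sigma_{\max}^2/\lambda_{\min}(Q))\|x\|^2$ and $h\geq \lambda_{\min}(H)\|x\|^2$ with monotonicity of $t\mapsto t/(C+t)$ and clearing denominators by $\lambda_{\min}(Q)$ reproduces the claimed upper endpoint.

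For the lower bound in Case~B the crucial device is the chain $\|x\|^2\leq h/\lambda_{\min}(H)$, which gives $a\leq (\lambda_{\max}(A)/\lambda_{\min}(H))\,h$ and hence
\[
h+a\leq h\Bigl(1+\tfrac{\lambda_{\max}(A)}{\lambda_{\min}(H)}\Bigr)\leq \bigl(\lambda_{\max}(H)+\kappa(H)\lambda_{\max}(A)\bigr)\|x\|^2,
\]
which is precisely where the condition number $\kappa(H)$ appears in the denominator of the claimed bound. Pairing this with a lower Rayleigh estimate of the form $c\geq (\sigma_{\min}^2/\lambda_{\max}(Q))\|x\|^2$ and substituting into $\mu_-\geq c/(h+a+c)$ then delivers the second entry of the minimum.

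The main obstacle is justifying $c\geq (\sigma_{\min}^2/\lambda_{\max}(Q))\|x\|^2$ for a general eigenvector: the rank deficiency of $B$ gives a priori only $\|Bx\|^2\geq \sigma_{\min}^2\|P_R x\|^2$, where $P_R$ is the orthogonal projector onto $\text{range}(B^T)$. To close this gap I would project the matrix identity $\tau^2 Hx-\tau Ax+B^TQ^{-1}Bx=0$ onto $\nul(B)$; the third term vanishes because $B^TQ^{-1}Bx\in \text{range}(B^T)$, leaving the compatibility relation $\tau\,P_N Hx=P_N Ax$. Solving this relation for $P_N x$ in terms of $P_R x$ and absorbing the resulting operator-norm constant into the $\kappa(H)$ factor is the delicate quantitative step, and is where I expect the technical difficulty of the argument to concentrate.
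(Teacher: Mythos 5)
Your reduction is algebraically sound as far as it goes: eliminating $y$ via $y=-\tau^{-1}Q^{-1}Bx$ to get $\tau^2Hx-\tau Ax+B^TQ^{-1}Bx=0$, the scalar quadratic $h\tau^2-a\tau+c=0$, and the formula $\mu_\pm=\frac{(a+2c)\pm\sqrt{a^2-4hc}}{2(h+a+c)}$ all check out, and Case~A together with the Case~B upper bound are handled correctly. This is a genuinely different route from the paper's: the paper never forms the Schur complement in $x$, but instead applies a chain of explicit similarity transformations reducing $\mathcal{K}^{-1}\mathcal{A}$ to $\tilde{\mathcal{K}}^{-1}\tilde{\mathcal{A}}$ with $\tilde{\mathcal{K}}=\left(\begin{smallmatrix} I & \tilde B^T\\ 0 & -I\end{smallmatrix}\right)$, shows that a real eigenvalue equals the convex combination $u^*\Lambda u+v^*Pv$ with $\|u\|_2^2+\|v\|_2^2=1$, and then bounds the spectrum of $\Lambda$ (similar to $(H+A+B^TQ^{-1}B)^{-1}(A+B^TQ^{-1}B)$) and the nonzero spectrum of $P=\bar B(I+\bar B^T\bar B)^{-1}\bar B^T$ separately; the factor $\kappa(H)$ enters there through $\lambda_{\min}(H^{-1/2}AH^{-1/2})\geqslant \lambda_{\min}(A)/\lambda_{\max}(H)$, not through a projection argument.

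The genuine gap is the one you flag yourself, and it is not closed by the repair you sketch. The Case~B lower bound needs $c=x^*B^TQ^{-1}Bx\geqslant (\sigma_{\min}^2/\lambda_{\max}(Q))\|x\|_2^2$, which fails whenever $x$ has a large component in $\nul(B)$; only $c\geqslant \sigma_{\min}^2\|P_Rx\|_2^2/\lambda_{\max}(Q)$ is available. Projecting the vector identity onto $\nul(B)$ does give $\tau P_NHx=P_NAx$, i.e.\ $\bigl(\tau P_NHP_N-P_NAP_N\bigr)P_Nx=\bigl(P_NA-\tau P_NH\bigr)P_Rx$, but to bound $\|P_Nx\|$ by $\|P_Rx\|$ you must invert $P_N(\tau H-A)P_N$ on $\nul(B)$, and this operator becomes singular precisely when $\tau$ approaches a generalized eigenvalue of the pencil $(A,H)$ restricted to $\nul(B)$ --- which is exactly the regime of your Case~A, so eigenvalues of Case~B type can sit arbitrarily close to it. Hence there is no uniform constant to ``absorb into $\kappa(H)$,'' and the second entry of the minimum in the statement is not established by your argument. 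To finish along these lines you would need an additional structural fact forcing $\|P_Rx\|$ to be comparable to $\|x\|$ for Case~B eigenvectors, or you would need to switch to a decomposition like the paper's, in which the null-space contribution is isolated additively in the term $v^*Pv$ rather than appearing in the denominator of a Rayleigh quotient.
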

\begin{proof}
Since ${\mathcal{K}}^{-1}\mathcal{A}$ is similar to $\tilde{\mathcal{K}}^{-1}\tilde{\mathcal{A}}$, we only need to study the nonzero real  eigenvalues of the matrix $\tilde{\mathcal{K}}^{-1}\tilde{\mathcal{A}}$ which are the same as those of the matrix $\tilde{\mathcal{A}}\tilde{\mathcal{K}}^{-1}$. Since $\tilde{A}$ is symmetric positive definite and it is similar to $(H+A+B^TQ^{-1}B)^{-1}(A+B^TQ^{-1}B)$, we deduce that all the eigenvalues of $\tilde{A}$ are positive and less than one. On the other hand,
\begin{align*}
\tilde{\mathcal{A}}\tilde{\mathcal{K}}^{-1} &=\left(\begin{matrix}
\tilde{A}&\tilde{B}^T\\
\tilde{B}&0
\end{matrix}\right)\left(\begin{matrix}
I&\tilde{B}^T\\
0&-I
\end{matrix}\right)^{-1}\\
&=\left(\begin{matrix}
\tilde{A}(\tilde{A}-I)&(\tilde{A}-I)\tilde{B}^T\\
\tilde{B}(\tilde{A}-I)&\tilde{B}\tilde{B}^T
\end{matrix}\right)\left(\begin{matrix}
\tilde{A}-I & 0\\
0 & I
\end{matrix}\right)^{-1}\\
&\triangleq \breve{\mathcal{A}}\breve{\mathcal{S}}^{-1},
\end{align*}
where $\mathcal{\breve{A}}$ and $\breve{\mathcal{S}}$ are symmetric and $\breve{\mathcal{S}}$ is nonsingular. Then, then the eigenvalues of $\tilde{\mathcal{K}}^{-1}\tilde{\mathcal{A}}$ and $\breve{\mathcal{S}}^{-1}\breve{\mathcal{A}}$ are the same.
Assume that $\tilde{A}= X\Lambda X^T$ with $I-\Lambda\succ 0$, where $X$ is an orthogonal matrix. Define
\begin{equation}
\mathcal{Z}=\left(\begin{matrix}
X & 0\\
0 & I
\end{matrix}\right)\quad \textrm{and} \quad  \mathcal{D}=\left(\begin{matrix}
I-\Lambda & 0\\
0 & I
\end{matrix}\right)\succ 0.
\end{equation}
Since $\mathcal{Z}$ is orthogonal and $\mathcal{D}$ is symmetric positive definite, the matrix $\breve{\mathcal{S}}^{-1}\breve{\mathcal{A}}$ is similar to $\mathcal{D}^{\frac{1}{2}}\mathcal{Z}^T\breve{\mathcal{S}}^{-1}\breve{\mathcal{A}}\mathcal{Z}\mathcal{D}^{-\frac{1}{2}}$. We have
\begin{equation}\label{eq3.12}
\mathcal{D}^{\frac{1}{2}}\mathcal{Z}^T\breve{\mathcal{S}}^{-1}\breve{\mathcal{A}}\mathcal{Z}\mathcal{D}^{-\frac{1}{2}}=\left(\mathcal{D}^{-\frac{1}{2}}\mathcal{Z}^T\breve{\mathcal{S}}\mathcal{Z}\mathcal{D}^{-\frac{1}{2}}\right)^{-1}\left(\mathcal{D}^{-\frac{1}{2}}\mathcal{Z}^T\breve{\mathcal{A}}\mathcal{Z}\mathcal{D}^{-\frac{1}{2}}\right).
\end{equation}
Let $(\lambda,w)$ be an eigenpair of the matrix $\mathcal{D}^{\frac{1}{2}}\mathcal{Z}^T\breve{\mathcal{S}}^{-1}\breve{\mathcal{A}}\mathcal{Z}\mathcal{D}^{-\frac{1}{2}}$ such that $\lambda\neq 0$. Thus, from Eq. \eqref{eq3.12} it holds that
\[
\mathcal{D}^{-\frac{1}{2}}\mathcal{Z}^T\breve{\mathcal{A}}\mathcal{Z}\mathcal{D}^{-\frac{1}{2}}w=\lambda \mathcal{D}^{-\frac{1}{2}}\mathcal{Z}^T\breve{\mathcal{S}}\mathcal{Z}\mathcal{D}^{-\frac{1}{2}}w,
\]
which is equivalent to
\begin{equation}\label{eq3.13}
\left(\begin{matrix}
-\Lambda & Q^T\\
Q & P
\end{matrix}\right)w=\lambda\left(\begin{matrix}
-I & 0\\
0 & I
\end{matrix}\right)w,
\end{equation}
where $Q=-\tilde{B}X(I-\Lambda)^{-\frac{1}{2}}$ and $P=\tilde{B}\tilde{B}^T$. Without loss of generality, assume that $w=(u;v)$ such that $\norm{u}^2_2+\norm{v}^2_2=1$. Therefore, we can rewrite Eq. \eqref{eq3.13} as
\begin{equation}\label{eq3.14}
\begin{cases}
-\Lambda u +Q^Tv=-\lambda u,\\
Qu+Pv=\lambda v.
\end{cases}
\end{equation}
Multiplying both sides of the first and second equality in Eq. \eqref{eq3.14} by $u^*$ and $v^*$, respectively, leads to
\begin{equation}\label{eq3.14.1}
u^*\Lambda u - u^*Q^Tv=\lambda \norm{u}^2_2 \quad\textrm{and}\quad v^*Qu=\lambda \norm{v}^2_2-v^*Pv.
\end{equation}
Combining the two equations in \eqref{eq3.14.1} and using $\norm{u}^2_2+\norm{v}^2_2=1$, eventuate
\begin{equation}\label{eq3.15}
u^*\Lambda u+v^*Pv-\bar{\lambda}+(\bar{\lambda}-\lambda)\norm{v}^2_2=0.
\end{equation}
By considering the real part of Eq. \eqref{eq3.15} and the , we see that
\begin{equation}\label{eq3.16}
a=u^*\Lambda u+v^*Pv\leqslant \norm{u}^2_2\lambda_{\max}(\Lambda)+\norm{v}^2_2\lambda_{\max}(P) \leqslant \max\{\lambda_{\max}(\Lambda),\lambda_{\max}(P)\}.
\end{equation}
In the same way, we deduce that
\begin{equation}\label{eq3.16-0}
a\geqslant \min\{\lambda_{\min}(\Lambda),\lambda_{\min}(P)\}.
\end{equation}
On the other hand, the eigenvalues of $\Lambda$ and $\tilde{A}$ are the same. Using the proof of Theorem \ref{Theo4} we want to study the upper bound of the eigenvalues of $\Lambda$. To do this, since $\Lambda$ is symmetric, using the Courant-Fisher min-max theorem  we conclude that
\begin{align}\label{eq3.16.1}
\nonumber\lambda(\Lambda)&=\dfrac{x^*Ax+x^*B^TQ^{-1}Bx}{x^*Hx+x^*Ax+x^*B^TQ^{-1}Bx}\\
\nonumber&\leqslant \dfrac{\lambda_{\max}(A)x^*x+\lambda_{\max}(Q^{-1})x^*B^TBx}{\lambda_{\min}(H)x^*x+\lambda_{\max}(A)x^*x+\lambda_{\max}(Q^{-1})x^*B^TBx}\\
\nonumber&\leqslant \dfrac{\lambda_{\max}(A)x^*x+\lambda_{\max}(Q^{-1})\lambda_{\max}(B^TB)x^*x}{\lambda_{\min}(H)x^*x+\lambda_{\max}(A)x^*x+\lambda_{\max}(Q^{-1})\lambda_{\max}(B^TB)x^*x}\\
&=\dfrac{\lambda_{\min}(Q)\lambda_{\max}(A)+\sigma_{\max}^2}{\lambda_{\min}(Q)(\lambda_{\min}(H)+\lambda_{\max}(A))+\sigma_{\max}^2}.
\end{align}
It follows from the proof of Theorem \ref{Theo4} and Eq. \eqref{eq3.16.1} that
\begin{equation}\label{eq3.17}
\dfrac{\lambda_{\min}(A)}{\lambda_{\max}(H)+\lambda_{\min}(A)}I\preceq \Lambda \preceq \dfrac{\lambda_{\min}(Q)\lambda_{\max}(A)+\sigma_{\max}^2}{\lambda_{\min}(Q)(\lambda_{\min}(H)+\lambda_{\max}(A))+\sigma_{\max}^2}I.
\end{equation}
We can rewrite the matrix $P$ as $P=\tilde{B}\tilde{B}^T=\bar{B}(I+\bar{B}^T\bar{B})^{-1}\bar{B}^T$.
 Let $\bar{B}=U[\Sigma, 0]V^T$ be the singular value decomposition of the matrix $\bar{B}$ such that $U\in \mathbb{R}^{m\times m}$ and $V\in \mathbb{R}^{n\times n}$ are orthogonal matrices and $\Sigma =\textrm{diag}(\tau_1, \tau_2, \cdots , \tau_r,0,\ldots,0)\in \mathbb{R}^{m\times m}$ is a diagonal matrix, where $\tau_1\geqslant \tau_2\geqslant \cdots\geqslant \tau_r>0$ are the nonzero singular values of the matrix $\bar{B}$. Hence,
\begin{align*}
P &=U\Sigma(I+\Sigma^2)^{-1}\Sigma U^T\\
&= U \text{diag}(\frac{\tau^2_1}{1+\tau^2_1}, \ldots, \frac{\tau^2_r}{1+\tau^2_r},0,\ldots, 0)U^T.
\end{align*}
Therefore, the nonzero eigenvalues of $P$ satisfy
\begin{equation}\label{eq3.18}
\frac{\tau^2_r}{1+\tau^2_r}\leqslant \lambda(P)\leqslant \frac{\tau^2_1}{1+\tau^2_1},
\end{equation}
where $\lambda(P)$ is a nonzero eigenvalue of $P$. Obviously, $\tau^2_1$ is the largest eigenvalue of the matrix $\bar{B}\bar{B}^T=Q^{-\frac{1}{2}}B(H+A)^{-1}B^TQ^{-\frac{1}{2}}$. By using Courant-Fisher Min-Max theorem we obtain
\begin{align}\label{eq3.18.1}
\nonumber x^*Q^{-\frac{1}{2}}B(H+A)^{-1}B^TQ^{-\frac{1}{2}}x &=x^*Q^{-\frac{1}{2}}BH^{-\frac{1}{2}}(I+H^{-\frac{1}{2}}AH^{-\frac{1}{2}})^{-1}H^{-\frac{1}{2}}B^TQ^{-\frac{1}{2}}\\
\nonumber &\leqslant \lambda_{\max}(I+S)^{-1}x^*Q^{-\frac{1}{2}}BH^{-1}B^TQ^{-\frac{1}{2}}x\\
\nonumber &\leqslant \frac{1}{1+\lambda_{\min}(S)}~\frac{1}{\lambda_{\min}(H)}~x^*Q^{-\frac{1}{2}}BB^TQ^{-\frac{1}{2}}x \\
\nonumber & \leqslant \frac{1}{1+\lambda_{\min}(S)}~\frac{1}{\lambda_{\min}(H)}~\sigma^2_{\max}~ x^*Q^{-1}x\\
&\leqslant \dfrac{\sigma^2_{\max}}{(1+\lambda_{\min}(S))\lambda_{\min}(H)\lambda_{\min}(Q)}~ x^*x,
\end{align}
where $S=H^{-\frac{1}{2}}AH^{-\frac{1}{2}}$. Furthermore, since $S$ is symmetric, we can write that
\begin{eqnarray*}
          x^*Sx &\h=\h        & x^*H^{-\frac{1}{2}}AH^{-\frac{1}{2}}x \\
                &\h\geqslant\h& \lambda_{\min}(A)x^*H^{-1}x \\
                &\h\geqslant\h& \lambda_{\min}(H^{-1})\lambda_{\min}(A)~x^*x\\
                &\h=\h        &\frac{\lambda_{\min}(A)}{\lambda_{\max}(H)}~x^*x.
\end{eqnarray*}
Thus
\begin{equation}\label{eq3.18.2}
\lambda_{\min}(S)\geqslant \frac{\lambda_{\min}(A)}{\lambda_{\max}(H)}.
\end{equation}
Form Eqs. \eqref{eq3.18.1} and \eqref{eq3.18.2}, it is straightforward to see that
\begin{equation*}
\tau_1^2\leqslant \dfrac{\sigma^2_{\max}}{\lambda_{\min}(Q)(\lambda_{\min}(H)+\frac{1}{\kappa(H)}\lambda_{\min}(A))}.
\end{equation*}
In the same way, we derive that
\begin{equation}\label{eq3.18.3}
\dfrac{\sigma^2_{\min}}{\lambda_{\max}(Q)(\lambda_{\max}(H)+\kappa(H)\lambda_{\max}(A))}\leqslant\tau_r^2\leqslant \tau_1^2\leqslant \dfrac{\sigma^2_{\max}}{\lambda_{\min}(Q)(\lambda_{\min}(H)+\frac{1}{\kappa(H)}\lambda_{\min}(A))}.
\end{equation}
 From Eq. \eqref{eq3.18} and \eqref{eq3.18.3} for the nonzero eigenvalues of $P$ we have
\begin{equation}\label{eq3.19}
\begin{cases}
\lambda_{\min}(P)\geqslant \dfrac{\sigma^2_{\min}}{\lambda_{\max}(Q)(\lambda_{\max}(H)+\kappa(H)\lambda_{\max}(A))+\sigma^2_{\min}},\\
\lambda_{\max}(P)\leqslant\dfrac{\sigma^2_{\max}}{\lambda_{\min}(Q)(\lambda_{\min}(H)+\frac{1}{\kappa(H)}\lambda_{\min}(A))+\sigma^2_{\max}}.
\end{cases}
\end{equation}
Using Eqs. \eqref{eq3.16}, \eqref{eq3.16-0}, \eqref{eq3.17} and \eqref{eq3.19}, for the nonzero real eigenvalues of $\mathcal{K}^{-1}\mathcal{A}$ we evaluate that
\begin{align*}
&\min \left\{\dfrac{\lambda_{\min}(A)}{\lambda_{\max}(H)+\lambda_{\min}(A)}, \dfrac{\sigma^2_{\min}}{\lambda_{\max}(Q)(\lambda_{\max}(H)+\kappa(H)\lambda_{\max}(A))+\sigma^2_{\min}} \right\}\leqslant a \\\nonumber
&\leqslant \max \left\{\dfrac{\lambda_{\min}(Q)\lambda_{\max}(A)+\sigma_{\max}^2}{\lambda_{\min}(Q)(\lambda_{\min}(H)+\lambda_{\max}(A))+\sigma_{\max}^2},
\dfrac{\sigma^2_{\max}}{\lambda_{\min}(Q)(\lambda_{\min}(H)+\frac{1}{\kappa(H)}\lambda_{\min}(A))+\sigma^2_{\max}}\right\}.
\end{align*}
Since we have
\begin{equation*}
\dfrac{\lambda_{\min}(Q)\lambda_{\max}(A)+\sigma_{\max}^2}{\lambda_{\min}(Q)(\lambda_{\min}(H)+\lambda_{\max}(A))+\sigma_{\max}^2}
>\dfrac{\sigma^2_{\max}}{\lambda_{\min}(Q)(\lambda_{\min}(H)+\frac{1}{\kappa(H)}\lambda_{\min}(A))+\sigma^2_{\max}},
\end{equation*}
the desired result is obtained.
\end{proof}

\section{Numerical experiments}\label{Sec5}
In this section, we verify the effectiveness of the GMRES($\ell$) in conjunction with the MGSS preconditioner  for solving singular saddle point problems. All the experiments are carried with some \textsc{Matlab} codes on a machine with 3.20GHz CPU and 8GB RAM.

In all the experiments, the right-hand side vector $b$ is set to $b=\mathcal{A}u^*$, where $u^*\in\mathbb{R}^{m+n}$ is a vector of all ones.
A null vector is always used as an initial guess and the computations are terminated when the 2-norm of the residual vector of the preconditioned system  is reduced by  a factor of $10^{-7}$ or when the number of iterations exceeds 1000. Throughout this section, ``iters''  and ``CPU'' stand for the number of iterations and the CPU time (in seconds) for the semi-convergence, respectively. Moreover, ``$R_k$" is defined as
 \[
 R_k=\frac{\|\mathcal{Q}^{-1}r^{(k)}\|_2}{\|\mathcal{Q}^{-1}r^{(0)}\|_2},
 \]
where $\mathcal{Q}$ is the GSS or MGSS preconditioner,  $r^{(k)}=b-\mathcal{A}u^{(k)}$  ($u^{(k)}$ is the computed solution) and $r^{(0)}=b-\mathcal{A}u^{(0)}$. It is noted
that when the GMRES($\ell$) without preconditioning is used the matrix $\mathcal{Q}$ is set to the identity matrix.  Meanwhile, ``--'' shows that the method fails to converge in at most 1000 iterations.  We choose the matrices $H$ and $Q$ as
\[
H=\alpha(A+A^T)\quad  \textrm{and} \quad Q=\alpha I+\beta BB^T,
\]
where $\alpha$ and $\beta$ are two positive numbers.  Obviously, both of the matrices $H$ and $Q$ are symmetric positive definite. To perform the MGSS preconditioner we need to choose the parameters $\alpha$ and $\beta$ appropriately. We use different values of the parameter $\alpha$ and $\beta$ for each size of matrices for both of the MGSS and the GSS preconditioners. We use GMRES($5$) in conjunction with the GSS and MGSS preconditioners to solve the saddle point problem \eqref{eq1}. To show the effectiveness of both of the preconditioners we also give the numerical results of GMRES($5$) without preconditioning.

We present two examples and for each of them we report two sets of the numerical results. First, the results of the GMRES($5$) are given when all the subsystems are solved exactly by the LU factorization and for the second set of the numerical results, the problem is solved by the same method and the same preconditioners such that the subsystems with the coefficient matrix $H+A$ are solved by the LU factorization and the subsystem with the coefficient matrix $S$  (step 3 of Algorithm 1) is solved by the GMRES($5$) method. To solve the subsystems with the coefficient matrix $S$ with GMRES(5), we use a zero vector as an initial guess and the iterations are stopped as soon as the 2-norm of the residual is reduced by a factor of $10^{-5}$. We now present the examples.


\begin{example}\label{exp1}\rm
We consider the the Oseen equations of the form
\begin{equation*}
\begin{cases}
-\nu \Delta  u +w\cdot \nabla u + \nabla p = f, \\
\nabla \cdot u = 0.
\end{cases}\textrm{in}~\Omega=(-1, 1)\times(-1, 1) \subset \Bbb{R}^2,
\end{equation*}
which are obtained from linearization of the steady-state Navier-Stokes equations by the Picard iteration.
Here the vector field $w$ is the approximation of $u$ from the previous Picard iteration.  {We take the linear system from the 9th Picard iteration.}
The parameter $\nu>0$ represents viscosity, $\Delta$ is the Laplacian operator, $\nabla$ is the gradient. The test problem is a leaky two dimensional lid-driven cavity problem on the unit square domain. We discretize the test problem by the Q2-Q1 mixed finite element method on uniform grids. We use the IFISS software package developed by Elman et al. \cite{IFISS} to generate  linear systems corresponding to $16\times 16$, $32\times 32$, $64\times 64$ and $128\times 128$ grids. We set $\nu=0.01$.

In Table \ref{tab1} the results have been given when all the subsystems are solved exactly and in Table \ref{tab2} the numerical results, when the subsystem with the coefficient matrix $S$ is solved inexactly by the GMRES($5$) method, have been presented.
From Tables \ref{tab1} and \ref{tab2}, we see that the GMRES($5$) method without preconditioning converges very slowly. These tables show that both of the GSS and the MGSS preconditioners are effective. We observe that for large matrices not only the number of iterations of the MGSS preconditioner is less than that of  the GSS method, but also the CPU time of the MGSS preconditioner is less than that of the GSS preconditioner.

In Figure 1, the eigenvalue distribution of the matrices $\mathcal{A}$, $\mathcal{M}^{-1}\mathcal{A}$ and $\mathcal{P}_{GSS}^{-1}\mathcal{A}$ with $\alpha=10^{-4}$ and $\beta=10^{-3}$ have been displayed. We see that the eigenvalues of the matrix $\mathcal{M}^{-1}\mathcal{A}$ are   {more clustered than those of the matrices} $\mathcal{P}_{GSS}^{-1}\mathcal{A}$ and $\mathcal{A}$.
\end{example}

\begin{table}[!ht]
\begin{center}
\small
\caption{Numerical results for Example \ref{exp1} when the subsystems are solved exactly.}\label{tab1}
\vspace{0.2cm}
\begin{tabular}{|c|cc|ccc|ccc|ccc|} \hline
\multicolumn{3}{|c|}{}   &  \multicolumn{3}{c|}{MGSS method}  &  \multicolumn{3}{c|}{GSS method}   &  \multicolumn{3}{c|}{GMRES$(5)$ method}  \\
  \multicolumn{3}{|c|}{}      & \multicolumn{3}{c|}{}  &  \multicolumn{3}{c|}{}  &  \multicolumn{3}{c|}{}  \\ \hline
grid            & $\alpha$ & $\beta$ &Iters & CPU  &  $R_k$    & Iters & CPU & $R_k$   & Iters & CPU & $R_k$ \\ \hline
16$\times$16      & 1e-3 & 1e-2 & 1(3) & 0.05 &  7.30e-9  & 2(2)  & 0.02 &   4.55e-8   & 126(3)& 0.11  &9.92e-8\\
                          & 1e-3 & 1e-3 & 1(3) & 0.05 &  6.65e-9  & 2(1)  & 0.02 &   3.81e-8   &&& \\
                          & 1e-3 & 1e-4 & 1(3) & 0.05 &  6.65e-9  & 2(1)  & 0.02 &   2.57e-8   &&&\\
                          & 1e-2 & 1e-3 & 1(5) & 0.05 &  5.91e-9  & 3(5)  & 0.02 &   5.55e-8   &&&\\
                          & 1e-4 & 1e-3 & 1(2) & 0.05 &  1.72e-8  & 1(4)  & 0.01 &   4.67e-9   &&&\\ [1mm] \hline
32$\times$32      & 1e-3 & 1e-2 & 1(3) & 0.25  &  5.72e-8 & 3(3)  & 0.25  & 4.12e-8 & 385(3) & 0.61 & 9.96e-8 \\
                          & 1e-3 & 1e-3 & 1(3) & 0.25  &  5.62e-8 & 2(5) & 0.23 &  2.57e-8 &  &  &\\
                          & 1e-3 & 1e-4 & 1(3) & 0.25  &  5.60e-8 & 2(4) & 0.23  &  2.55e-8 &  &  &\\
                          & 1e-2 & 1e-3 & 2(1) & 0.26 &  3.21e-8  & 7(4) & 0.31  &  7.64e-8 &&&   \\
                          & 1e-4 & 1e-3 & 1(2) & 0.25 &  4.85e-8  & 1(5) & 0.22  &  3.81e-8 &&&            \\[1mm] \hline
64$\times$64      & 1e-3 & 1e-2 & 1(4)  & 7.60 &  2.84e-8 & 8(3) & 8.62 & 8.09e-8 &725(2) & 5.75  &1.00-7\\
                          & 1e-3 & 1e-3 & 1(4)  & 7.69 &  2.82e-8 & 5(3)  & 8.11 & 9.47e-8 &  &  &\\
                          & 1e-3 & 1e-4 & 1(4)  & 7.53 &  2.82e-8 & 4(1) & 8.01   & 9.12e-8  &  &  &\\
                          & 1e-2 & 1e-3 & 2(4)  & 7.69 &  2.36e-8& 16(5) & 9.85 & 9.68e-8  &&&\\
                          & 1e-4 & 1e-3 & 1(3)  & 7.50 & 7.67e-10 & 2(4) &  7.64  & 8.37e-9  & &&\\[1mm] \hline
 128$\times$128  & 1e-3 & 1e-2 & 1(5)  & 287.85 & 6.86e-8  & 27(5) & 345.54 & 9.94e-8 & -- &--  &--\\
                          & 1e-3 & 1e-3 & 1(5)  & 283.78 & 6.85e-8  & 15(5)  & 318.97   & 9.69e-8  &  &  &\\
                          & 1e-3 & 1e-4 & 1(5)  & 283.63 & 6.85e-8  & 9(3)  &  306.38  & 8.89e-8 &  &  &\\
                          & 1e-2 & 1e-3 & 15(5) & 287.96 & 7.34e-8 & 63(5) & 433.31 & 9.87e-8 & &&\\
                          & 1e-4 & 1e-3 & 1(5)  & 278.85 & 1.31e-8 & 4(2) & 293.27 & 6.90e-8  & &&\\ \hline

\end{tabular}
\end{center}
\end{table}

\begin{table}[!ht]
\begin{center}
\small
\caption{Numerical results for Example \ref{exp1} when the subsystem with the coefficient matrix $S$ is solved inexactly by the GMRES($5$).}\label{tab2}
\vspace{0.2cm}
\begin{tabular}{|c|cc|ccc|ccc|ccc|} \hline
\multicolumn{3}{|c|}{}   &  \multicolumn{3}{c|}{MGSS method}  &  \multicolumn{3}{c|}{GSS method}   &  \multicolumn{3}{c|}{GMRES(5) method}  \\
  \multicolumn{3}{|c|}{}      & \multicolumn{3}{c|}{}  &  \multicolumn{3}{c|}{}  &  \multicolumn{3}{c|}{}  \\ \hline
grid            & $\alpha$ & $\beta$ &Iters & CPU  &  $R_k$    & Iters & CPU & $R_k$   & Iters & CPU & $R_k$ \\ \hline
16$\times$16      & 1e-3 & 1e-2 & 1(3) & 0.23 &  7.38e-9  & 2(2)  & 0.29 &   4.55e-8   & 126(3)& 0.09  &9.92e-8\\
                          & 1e-3 & 1e-3 & 1(3) & 0.23 &  6.73e-9  & 2(1)  & 0.27 &   3.68e-8   &&& \\
                          & 1e-3 & 1e-4 & 1(3) & 0.23 &  6.67e-9  & 2(1)  & 0.27 &   2.62e-8   &&&\\
                          & 1e-2 & 1e-3 & 1(5) & 0.30 &  1.81e-8  & 3(5)  & 0.23 &   5.54e-8   &&&\\
                          & 1e-4 & 1e-3 & 1(2) & 0.18 &  5.75e-11& 1(4)  & 0.19 &   4.66e-9   &&&\\ [1mm] \hline
32$\times$32      & 1e-3 & 1e-2 & 1(3) & 0.44  &  5.75e-8 & 3(3)  & 0.82  &   4.12e-8 & 385(3) & 0.60 & 9.95e-8 \\
                          & 1e-3 & 1e-3 & 1(3) & 0.43  &  5.64e-8 & 2(5) & 0.77  &  2.56e-8 &  &  &\\
                          & 1e-3 & 1e-4 & 1(3) & 0.43  &  5.63e-8 & 2(4) & 0.71  &  2.55e-8 &  &  &\\
                          & 1e-2 & 1e-3 & 2(1) & 0.70  &  3.41e-8  & 7(3) & 2.24  &  9.51e-8 &&&   \\
                          & 1e-4 & 1e-3 & 1(2) & 0.35  &  4.97e-8  & 1(5) & 0.47  &  3.81e-8 &&&            \\[1mm] \hline
64$\times$64      & 1e-3 & 1e-2 & 1(4)  & 5.39 &  2.84e-8 & 8(3) & 15.98   & 8.11e-8 &725(2) & 5.85  &1.00-7\\
                          & 1e-3 & 1e-3 & 1(4)  & 5.19 &  2.82e-8 & 5(3)  & 17.45  & 9.40e-8 &  &  &\\
                          & 1e-3 & 1e-4 & 1(4)  & 5.28 &  2.82e-8 & 4(1) & 13.40  & 9.29e-8  &  &  &\\
                          & 1e-2 & 1e-3 & 2(4)  & 4.65 &  7.77e-10 & 16(5) & 66.22 & 9.70e-8  &&&\\
                          & 1e-4 & 1e-3 & 1(3)  & 5.87 &  2.32e-9 & 2(4) &  7.72  & 8.37e-9  & &&\\[1mm] \hline
 128$\times$128  & 1e-3 & 1e-2 & 3(1)  & 37.83 & 6.86e-8  & 27(5)  & 174.84 & 9.89e-8 & 4287(1) &74.17  &9.99-8\\
                          & 1e-3 & 1e-3 & 3(1)  & 37.62 & 6.85e-8  & 15(5)  & 383.05   & 9.70e-8  &  &  &\\
                          & 1e-3 & 1e-4 & 3(1)  & 37.98 & 6.85e-8  & 9(3) & 486.91 & 8.77e-8 &  &  &\\
                          & 1e-2 & 1e-3 & 3(5) & 34.30 & 7.37-8 & 63(1) & 732.82 & 9.90e-8 & &&\\
                          & 1e-4 & 1e-3 & 1(5)  & 34.56 &  1.07e-8 & 4(2)  & 84.43 & 6.93e-8  & &&\\ \hline

\end{tabular}
\end{center}
\end{table}

\begin{figure}[!ht]
\centerline{\includegraphics[height=5cm,width=5.5cm]{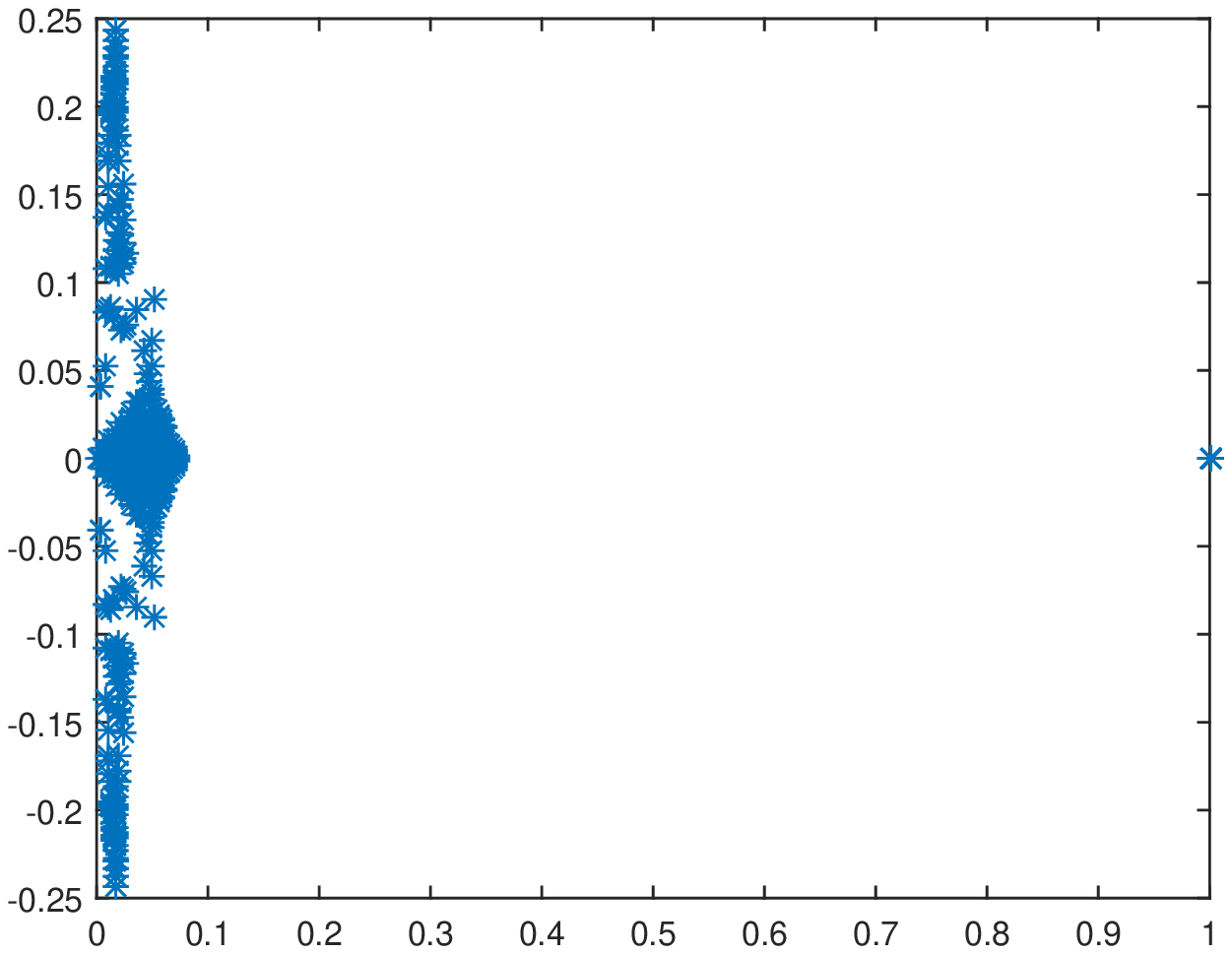}\includegraphics[height=5cm,width=5.5cm]{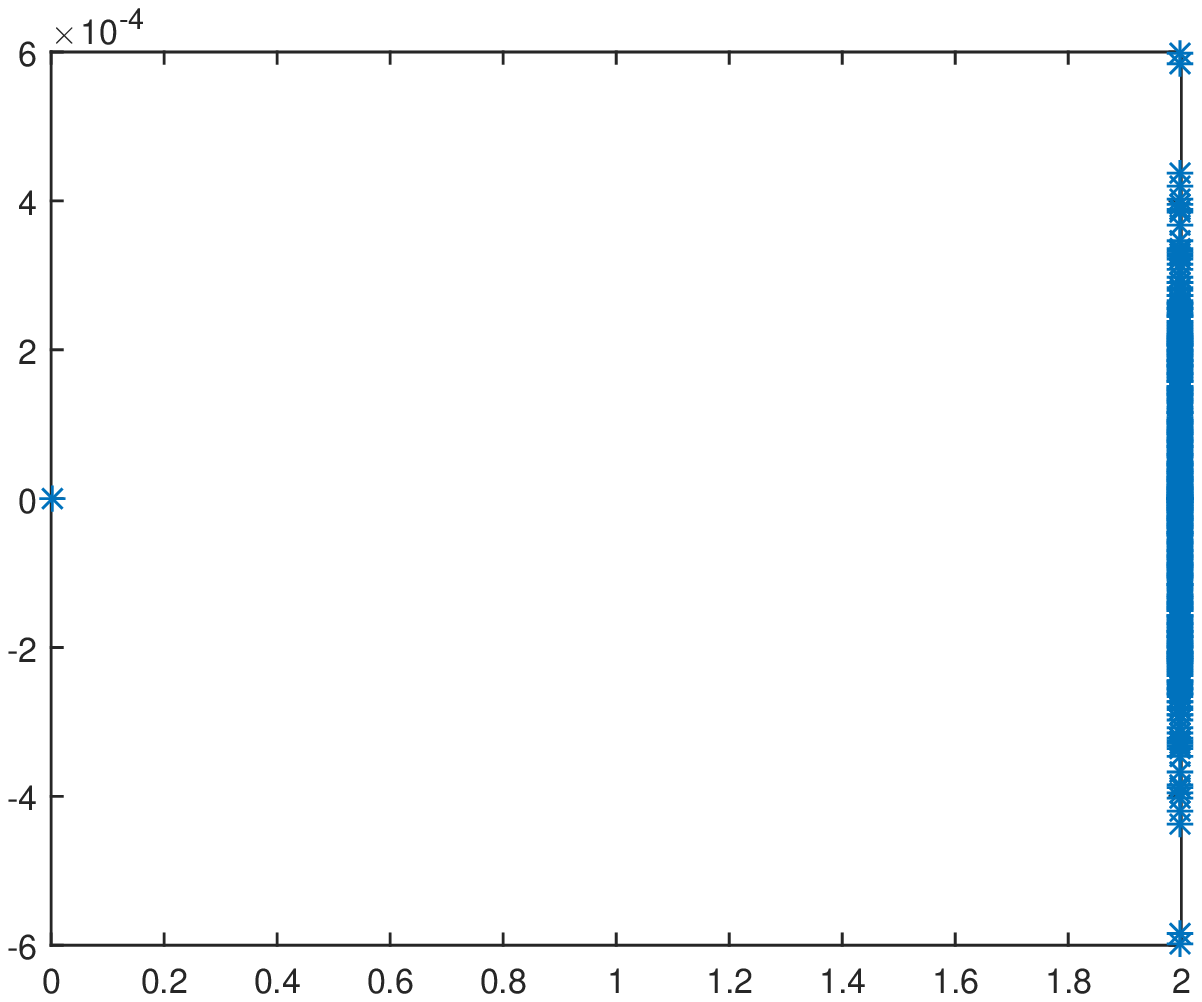}\includegraphics[height=5cm,width=5.5cm]{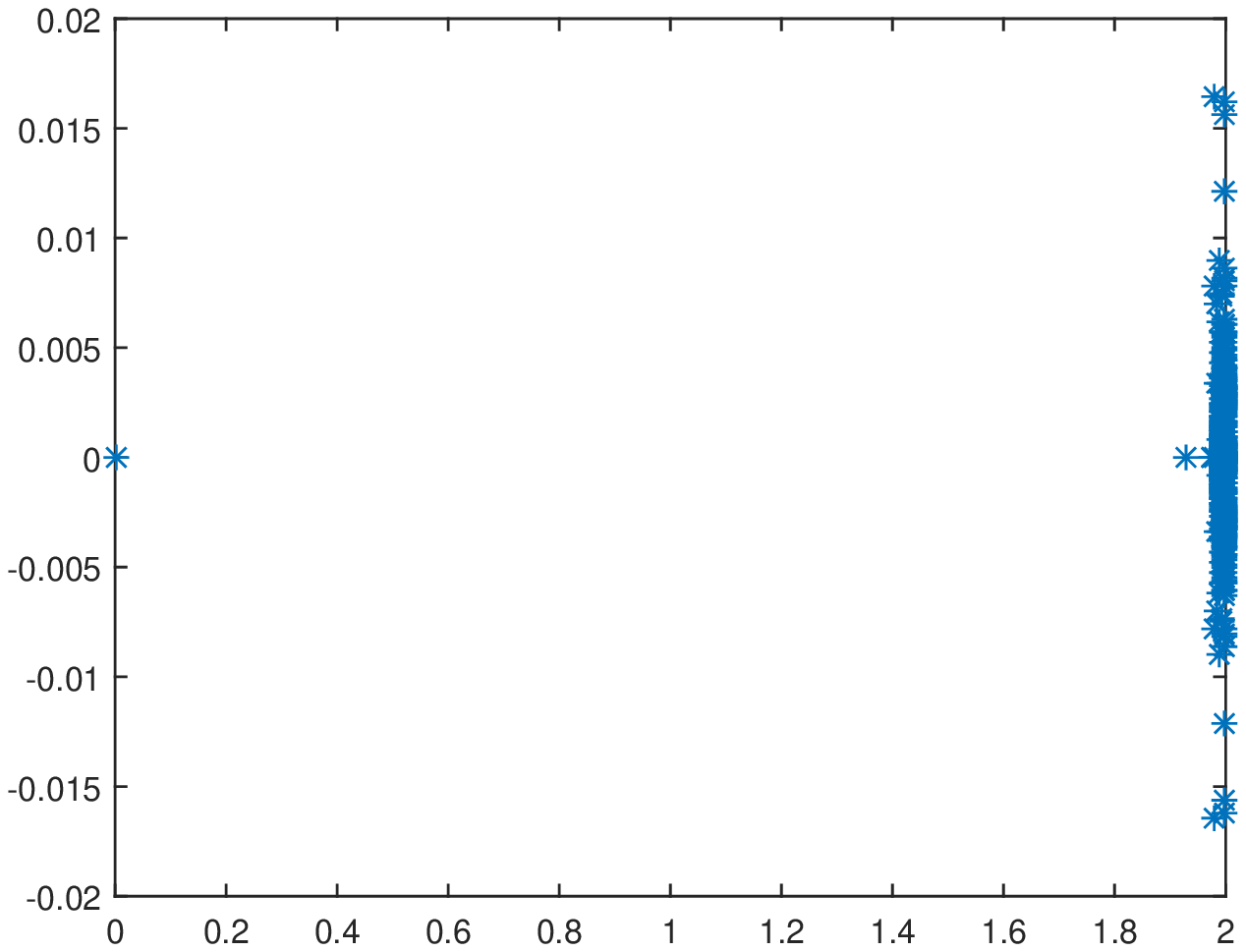}}
{\caption{The eigenvalue distribution of the matrix $\mathcal{A}$ (left), the preconditioned matrices $\mathcal{M}^{-1}\mathcal{A}$ (middle) and  $\mathcal{P}_{GSS}^{-1}\mathcal{A}$ (right) with $\alpha=10^{-4}$ and $\beta=10^{-3}$ and $\textrm{grid}=16\times 16$ for Example \ref{exp1}.}}
\label{Fig1}
\end{figure}

\begin{example}\label{exp2}\rm
In this example, we consider the Navier-Stokes problem
\begin{equation*}
\begin{cases}
-\nu \Delta  u +u\cdot \nabla u + \nabla p = f, \\
\nabla \cdot u = 0,
\end{cases}\textrm{in}~\Omega,
\end{equation*}
where $\Omega=(-1, 1)\times(-1, 1) \subset \Bbb{R}^2$. The scalar $\nu$ is the viscosity, the vector field $u$ represents the velocity, and $p$ denotes the pressure. We set $\nu=0.1$ and use the IFISS software to discretize the leaky lid-driven cavity problem in a square domain with hybrid linearization, and take a finite element subdivision based on uniform grids of square elements, i.e., $16\times 16$, $32\times 32$, $64\times 64$ and $128\times 128$ grids.  {It is noted that, in our hybrid linearization, at first two Picard steps are done to generate a good starting value for the Newton's method and then the Newton iteration is started \cite{Algor866}. For the test example, we use the system of the 4th iteration of the Newton's method.} We use Q2-Q1 pair to discrete the problem. All the assumptions and notations are the same as those used in Example \ref{exp1}. In Table \ref{tab3} the results of the GMRES($5$) method with the GSS and the MGSS preconditioners, when all the subsystems are solved exactly by the LU factorization,  have been given. This table shows that the MGSS preconditioner outperforms (or at least equals) the GSS preconditioner from point of view of number of iterations, especially for large matrices. Based on the CPU time, for large matrices the MGSS preconditioner is  superior to the GSS preconditioner, however this not the case for small problems.

\begin{table}[!ht]
\begin{center}
\small
\caption{Numerical results for Example \ref{exp2} when the subsystems are solved exactly.}\label{tab3}
\begin{tabular}{|c|cc|ccc|ccc|ccc|} \hline
\multicolumn{3}{|c|}{}   &  \multicolumn{3}{c|}{MGSS method}  &  \multicolumn{3}{c|}{GSS method}   &  \multicolumn{3}{c|}{GMRES(5) method}  \\
  \multicolumn{3}{|c|}{}      & \multicolumn{3}{c|}{}  &  \multicolumn{3}{c|}{}  &  \multicolumn{3}{c|}{}  \\ \hline
grid            & $\alpha$ & $\beta$ &Iters & CPU    &  $R_k$    & Iters   & CPU    &   $R_k$   & Iters & CPU & $R_k$ \\ \hline
16$\times$16      & 1e-3 & 1e-2 & 1(4) &   0.05    &  5.54e-9  & 2(3)   &  0.02   &   3.34e-8   & 78(3)& 0.08  &9.65e-8\\
                          & 1e-3 & 1e-3 & 1(4) &   0.05    &  4.41e-9  & 1(5)   &  0.01  &   3.23e-9   &&& \\
                          & 1e-3 & 1e-4 & 1(4) &   0.05    &  4.31e-9  & 1(4)   &  0.01  &   3.90e-9   &&&\\
                          & 1e-2 & 1e-3 & 2(2) &   0.05    &  4.55e-8  & 2(2)   &  0.01  &   3.20e-8   &&&\\
                          & 1e-4 & 1e-3 & 1(3) &   0.05    & 5.93e-10 & 1(4)   &  0.01   &   1.65e-8   &&&\\ [1mm] \hline
32$\times$32      & 1e-3 & 1e-2 & 1(5) &   0.25    &  8.39e-9  & 3(3)   &  0.24   &   3.86e-8 & 366(5) & 0.57  & 9.96e-8 \\
                          & 1e-3 & 1e-3 & 1(5) &   0.25    &  7.80e-9  & 2(2)   &  0.23   &  1.56e-8 &  &  &\\
                          & 1e-3 & 1e-4 & 1(5) &   0.25    &  7.75e-9  & 1(5)   &  0.22   &  4.43e-8 &  &  &\\
                          & 1e-2 & 1e-3 & 3(1) &   0.27    &  5.81e-8  & 3(3)   &  0.25   &  4.01e-8 &&&   \\
                          & 1e-4 & 1e-3 & 1(3) &   0.25    &  9.27e-9  & 1(5)   &  0.22   &  8.33e-8 &&&            \\[1mm] \hline
64$\times$64      & 1e-3 & 1e-2 & 2(2)  &   7.63    &  2.36e-8  & 7(4)   & 8.44    & 9.00e-8 &872(5) & 6.85  &9.97e-8\\
                          & 1e-3 & 1e-3 & 2(2)  &   7.60    &  2.32e-8  & 3(2)   & 7.91    & 5.82e-8 &  &  &\\
                          & 1e-3 & 1e-4 & 2(2)  &   7.61    &  2.31e-8  & 2(3)   & 7.76     &  5.90e-8  &  &  &\\
                          & 1e-2 & 1e-3 & 4(4)  &   7.98    & 7.34e-8   & 7(4)   & 8.46     &  4.95e-8  &&&\\
                          & 1e-4 & 1e-3 & 1(4)  &   7.54    & 2.16e-9   & 2(4)   & 7.73     &  3.09e-8  & &&\\[1mm] \hline
 128$\times$128  & 1e-3 & 1e-2 & 2(5)  & 280.54  & 3.82e-8   & 24(4) & 335.74 & 9.34e-8 & -- &--  &--\\
                          & 1e-3 & 1e-3 & 2(5)  & 282.18  & 3.79e-8   & 8(2)   & 300.36 & 8.02e-8  &  &  &\\
                          & 1e-3 & 1e-4 & 2(5)  & 283.17  & 3.78e-8   & 3(5)   & 293.27 & 8.73e-8 &  &  &\\
                          & 1e-2 & 1e-3 & 15(5) & 289.48 & 8.96e-8   & 25(4)  & 342.51 & 9.34e-8 & &&\\
                          & 1e-4 & 1e-3 & 1(5)   & 278.99 & 9.94e-8   & 4(2)   & 288.75 & 7.15e-8  & &&\\ \hline

\end{tabular}
\end{center}
\end{table}

In Figure 2, the eigenvalue distribution of the matrices $\mathcal{A}$, $\mathcal{M}^{-1}\mathcal{A}$ and $\mathcal{P}_{GSS}^{-1}\mathcal{A}$ with $\alpha=10^{-4}$ and $\beta=10^{-3}$ have been displayed. This figure shows that the eigenvalues of the preconditioned matrix $\mathcal{M}^{-1}\mathcal{A}$ are more clustered than those of the other matrices.

\begin{figure}[!ht]
\centerline{\includegraphics[height=5cm,width=5.5cm]{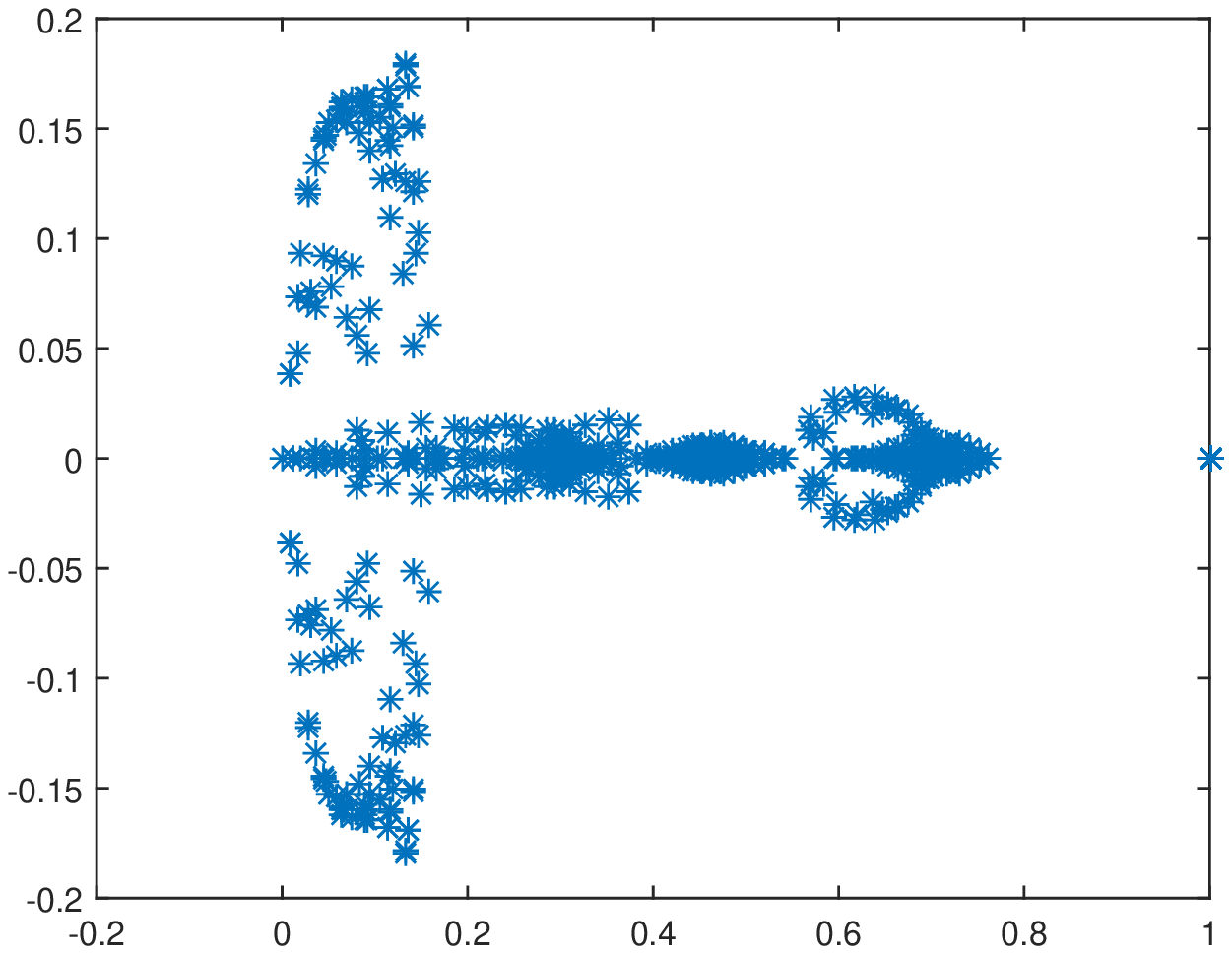}\includegraphics[height=5cm,width=5.5cm]{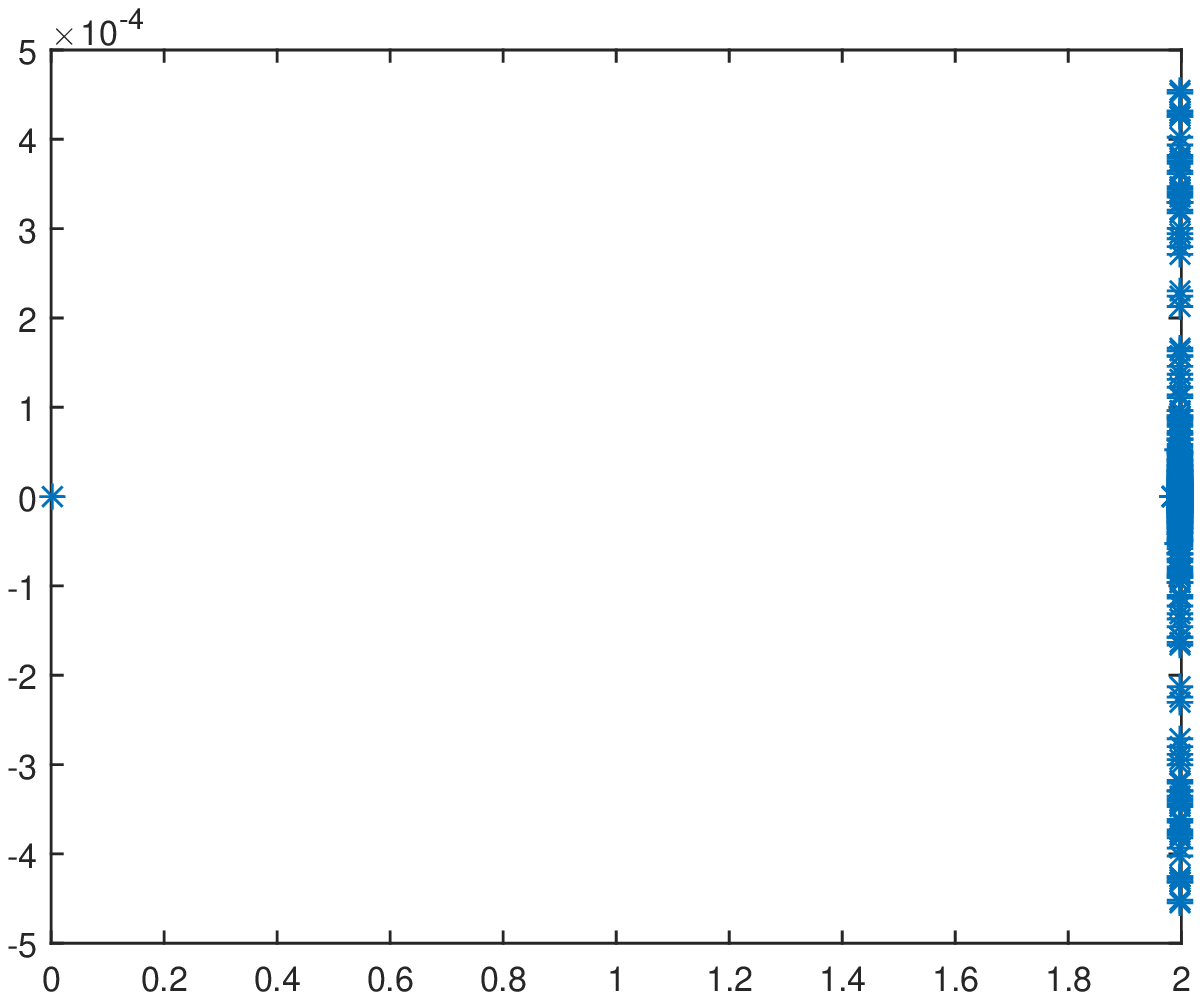}\includegraphics[height=5cm,width=5.5cm]{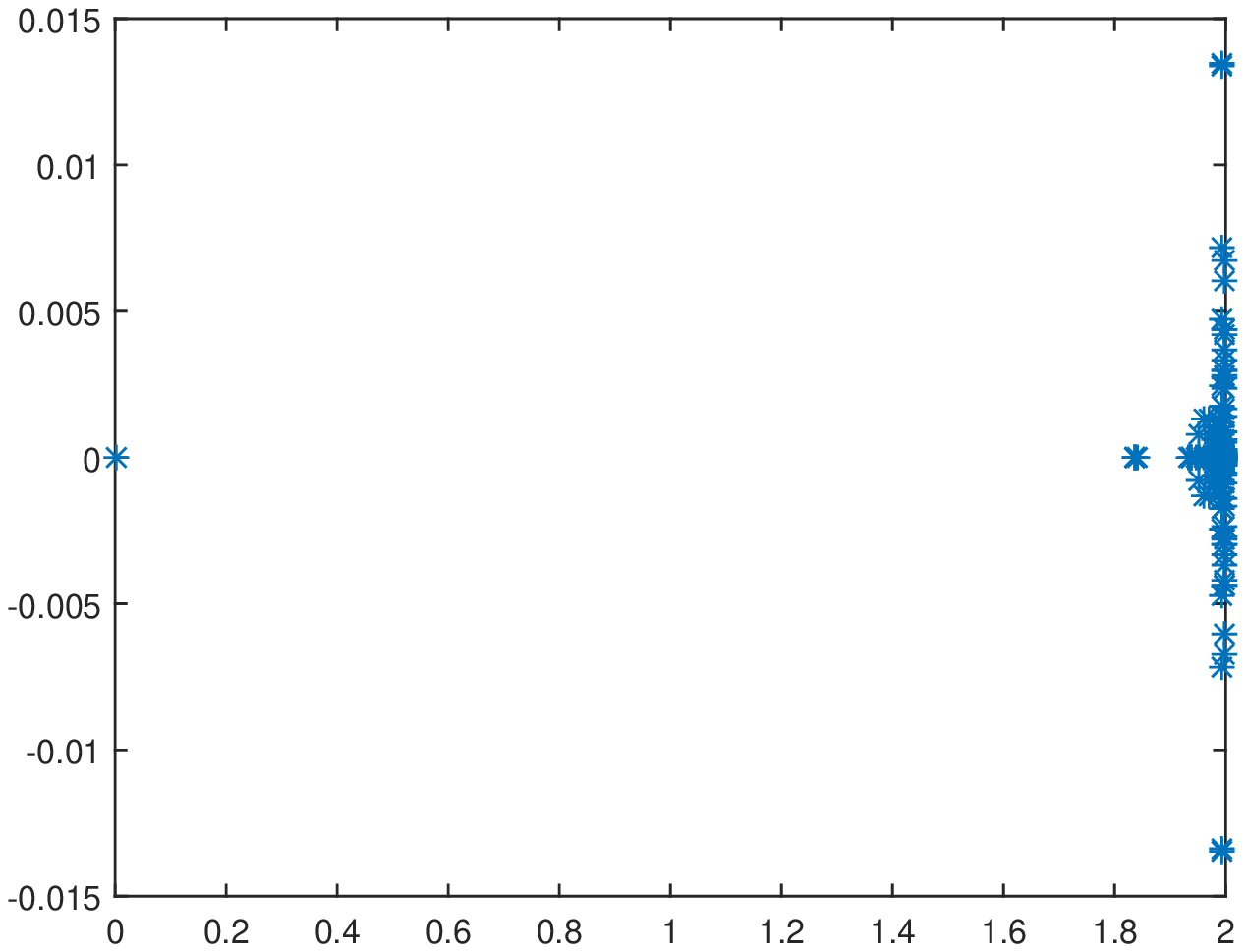}}
{\caption{The eigenvalue distribution of the saddle point matrix $\mathcal{A}$ (left) and the preconditioned matrices $\mathcal{M}^{-1}\mathcal{A}$ (middle) and $\mathcal{P}_{GSS}^{-1}\mathcal{A}$ (right) with $\alpha=10^{-4}$ and $\beta=10^{-3}$ and $\textrm{grid}=16\times 16$ for Example \ref{exp2}.}}
\end{figure}

In addition to this results, we have solved the subsystems with the coefficient matrix $H+A$ by the LU factorization and the subsystem with the coefficient matrix $S$ (step 3 of Algorithm 1) by the GMRES($5$) method.  Numerical results for different values of $\alpha$ and $\beta$
have been given in Table \ref{tab4}. As the numerical results show, for large matrices the MGSS preconditioner is superior to the GSS preconditioner. In other cases, the results are comparabele.

\begin{table}[!ht]
\begin{center}
\small
\caption{Numerical results for Example \ref{exp2} when the subsystem with the coefficient matrix $S$ is solved inexactly by GMRES($5$).}\label{tab4}
\begin{tabular}{|c|cc|ccc|ccc|ccc|} \hline
\multicolumn{3}{|c|}{}   &  \multicolumn{3}{c|}{MGSS method}  &  \multicolumn{3}{c|}{GSS method}   &  \multicolumn{3}{c|}{GMRES(5) method}  \\
  \multicolumn{3}{|c|}{}      & \multicolumn{3}{c|}{}  &  \multicolumn{3}{c|}{}  &  \multicolumn{3}{c|}{}  \\ \hline
grid            & $\alpha$ & $\beta$ &Iters & CPU  &  $R_k$    & Iters & CPU & $R_k$   & Iters & CPU & $R_k$ \\ \hline
16$\times$16      & 1e-3 & 1e-2 & 1(4) &   0.12   &  5.55e-9  & 2(3)  & 0.09 &   3.60e-8   & 78(3)& 0.09  &9.65e-8\\
                          & 1e-3 & 1e-3 & 1(4) &   0.12   &  4.41e-9  & 1(5)  & 0.07 &   3.24e-9   &&& \\
                          & 1e-3 & 1e-4 & 1(4) &   0.12   &  4.32e-9  & 1(4)  & 0.06 &   3.93e-9   &&&\\
                          & 1e-2 & 1e-3 & 2(2) &   0.13   &  4.67e-8  & 2(2)  & 0.09 &   4.85e-8   &&&\\
                          & 1e-4 & 1e-3 & 1(3) &   0.11   &  6.01e-10& 1(4)  & 0.06 &   1.65e-8   &&&\\ [1mm] \hline
32$\times$32      & 1e-3 & 1e-2 & 1(5) &   0.27   &  8.39e-9 & 3(3)  & 0.27  &   3.82e-8 & 366(5) & 0.57  & 9.99e-8 \\
                          & 1e-3 & 1e-3 & 1(5) &   0.28   &  7.81e-9 & 2(2) & 0.29  &  2.82e-8 &  &  &\\
                          & 1e-3 & 1e-4 & 1(5) &   0.27   &  7.75e-9 & 1(5) & 0.24  &  4.44e-8 &  &  &\\
                          & 1e-2 & 1e-3 & 3(1) &   0.31   &  5.81e-8  & 3(3) & 0.38  &  4.02e-8 &&&   \\
                          & 1e-4 & 1e-3 & 1(3) &   0.24   &  9.29e-9  & 1(5) & 0.22  &  8.33e-8 &&&            \\[1mm] \hline
64$\times$64      & 1e-3 & 1e-2 & 2(2)  &   2.18   &  1.82e-8 & 7(4) & 3.72   & 9.00e-8 &872(5) & 6.85  &9.97e-8\\
                          & 1e-3 & 1e-3 & 2(2)  &   2.16   &  1.78e-8 & 3(2) & 2.92 & 5.74e-8 &  &  &\\
                          & 1e-3 & 1e-4 & 2(2)  &   2.18   &  1.78e-8 & 2(3) & 3.00  &  5.36e-8  &  &  &\\
                          & 1e-2 & 1e-3 & 4(4)  &   2.32   &  7.34e-8& 7(4)  & 6.74  &  5.06e-8  &&&\\
                          & 1e-4 & 1e-3 & 1(4)  &   2.05   & 2.16e-9 & 2(4)  & 2.47 &  3.08e-8  & &&\\[1mm] \hline
 128$\times$128  & 1e-3 & 1e-2 & 2(5)  &  13.86  & 3.79e-8  & 24(4) & 60.78 & 9.35e-8 & -- &--  &--\\
                          & 1e-3 & 1e-3 & 2(5)  &  13.94  & 3.76e-8  & 8(2)   & 40.91 & 8.02e-8  &  &  &\\
                          & 1e-3 & 1e-4 & 2(5)  &  13.82  & 3.75e-8  & 3(5)   & 33.20 & 8.70e-8 &  &  &\\
                          & 1e-2 & 1e-3 & 5(4)  &  15.01  & 8.96e-8 & 25(5)  & 112.58 & 9.17e-8 & &&\\
                          & 1e-4 & 1e-3 & 1(4)  &  13.71  & 9.94e-8 & 4(2)    & 21.37  & 7.18e-8  & &&\\ \hline

\end{tabular}
\end{center}
\end{table}

\end{example}

\section{Conclusion}\label{Sec6}

We have presented a modification of the generalized shift-splitting (GSS) method say MGSS method for solving the singular saddle point problems. Semi-convergence analysis of the method as well as the eigenvalue distribution of the preconditioned matrix has been presented. The MGSS method serves the MGSS preconditioner. This preconditioner has been compared with the GSS preconditioner from the numerical point of view. Form the presented numerical results we concluded that both of the preconditioners are efficient. However, for large problems the MGSS preconditioner is superior to the GSS preconditioner from the number of iterations and the CPU time point of view.

\section*{Acknowledgment}

The authors would like to thank the three anonymous referees for their valuable comments and suggestions which substantially improved the quality of the paper.
The work of the first author is partially supported by University of Guilan.

\end{document}